\newtheorem{Thm}[equation]{Theorem}
\newtheorem{Prop}[equation]{Proposition}
\newtheorem{Lem}[equation]{Lemma}
\newtheorem{Remark}[equation]{Remark}
\newtheorem{Def}[equation]{Definition}
\newcommand{\ra}{\rangle}
\newcommand{\la}{\langle}
\newcommand{\ti}{\tilde}
\newcommand{\C}{\mathbb C}
\newcommand{\mR}{\mathcal R}
\newcommand{\bR}{\mathbb R}
\newcommand{\bB}{\mathbb B}
\newcommand{\bH}{\mathbb H}
\newcommand{\bW}{\mathbb W}
\newcommand{\mW}{\mathcal W}
\newcommand{\mB}{\mathcal B}
\newcommand{\mS}{\mathcal S}
\newcommand{\mL}{\mathcal L}
\newcommand{\mF}{\mathcal F}
\newcommand{\mK}{\mathcal K}
\newcommand{\mO}{\mathcal O}
\newcommand{\G}{\mathbb G}
\newcommand{\mg}{\mathfrak g}
\newcommand{\st}{\sqrt\tau}
\newcommand\T{\operatorname{Tr}}
\newcommand\N{\operatorname{Nm}}
\newcommand\Ker{\operatorname{Ker}}
\newcommand\End{\operatorname{End}}
\newcommand\Aut{\operatorname{Aut}}
\newcommand\im{\operatorname{Im}}
\newcommand\Res{\operatorname{Res}}
\newcommand\Id{\operatorname{Id}}
\newcommand\ind{\operatorname{ind}}
\newcommand\Ind{\operatorname{Ind}}
\theoremstyle{plain}
\title[Fourier transforms on the basic affine space]
      {Fourier transforms on the basic affine space of a quasi-split group}
\author{Nadya Gurevich and David Kazhdan}
\address{School of Mathematics,
Ben Gurion University of the Negev, POB 653, Be'er Sheva 84105, Israel}
\address{Einstein Institute of Mathematics,
  The Hebrew University of Jerusalem, Givat Ram, Jerusalem, 9190401, Israel}
\email{ngur@math.bgu.ac.il}
\email{kazhdan@math.huji.ac.il}
\numberwithin{equation}{section}
\begin{document}
\maketitle

\begin{abstract} 
We extend the Gelfand and Graev construction of generalized
Fourier transforms on basic affine space from split groups
to quasi-split groups over a  local  non-archimedean field $F$.

\end{abstract}  
\section{Introduction}\label{sec:intro}

\subsubsection{Notation}
\begin{itemize}
\item Let $F$ be  a local non-archimedean
  field with the norm $|\cdot|=|\cdot|_F$,
  the ring of integers $\mO$ and a fixed uniformizer
  $\varpi$ such that $|\varpi|=q^{-1},$
  where $q$ is the cardinality of the residue field.
\item We fix a non-trivial additive character $\psi$ throughout the paper.
  The self-dual Haar measure $dx$ on $F$ with respect to $\psi$ defines the
  Haar measure $d^\times x=\frac{dx}{|x|}$ on $F^\times.$

\item  For a quadratic extension $K$ of $F$ we denote by $\chi_K$
  the quadratic character of $F^\times,$
  associated to $K$  by class field theory. We also denote by $\chi_0$
  the trivial character of $F^\times$.
  
\item For a space $Y$ over $F$ we denote by $\mS^\infty(Y)$ (resp.
  $\mS_c(Y)$) the space of locally constant  (resp. locally
  constant of compact support) functions  on $Y$.
\item Throughout this paper we use boldface characters for group
  schemes over $F$, such as ${\bf H}$, and plain text characters
  for their group of $F$-points, such as $H$. 
\item
Let ${\bf G}$ be a  simply-connected quasi-split group defined over $F$
with a maximal $F$-split torus ${\bf T'}$ and the maximal torus
${\bf T}={\bf Z_G(T')}$.  We fix a Borel subgroup ${\bf B}$ of ${\bf G}$
containing ${\bf T}$ so that ${\bf B}={\bf T}\cdot {\bf U}$.
We write ${\bf U}^{op}$ for the unipotent radical of the opposite
Borel subgroup. 
\item
The Weyl group $W=N_G(T')/T$ acts on $T$ by conjugation and
we write $t^w$ for $w^{-1}tw$ for all $t\in T$, $w\in W$.
\item
The quotient  $X=U\backslash G$ is called the basic affine space
of $G$. For any $g\in G$ we write $[g]$ for the element $Ug$ in $X$. 
The space $X$ admits unique,
up to a scalar, $G$-invariant measure $\omega_X$.
The precise choice of $\omega_X$  is not important for general $G$,
but will be fixed for groups of rank $1$. 
\end{itemize}

\subsection{Fourier transforms on the
  basic affine space of a quasi-split group}\label{intro:part2}

We define a unitary representation $\theta $ of 
the group $G\times T$ on  $L^2(X,\omega_X)$ by:  
 $$\theta (g,t)f([h])=\delta^{1/2}_B(t)f([t^{-1}hg]),$$
where $\delta_B$ is the modular character.

For split groups Gelfand and Graev in
\cite{GelfandGraev},
see also \cite{KazhdanLaumon},
\cite{Kazhdan}, extended the action $\theta$ of $G\times T$
to a representation of $G\times (T\rtimes W),$ so that
every element $w$ of $W$
acts on $L^2(X,\omega_X)$ by an operator $\Phi_w$, called
a generalized Fourier transform.
Our paper has two goals:
\begin{itemize}
\item To extend the construction by Gelfand and Graev to
  quasi-split groups.
\item To show that  the Whittaker map intertwines  the action
  of $W$ on a dense subspace $\mS_0(X)$ in $L^2(X)$
with the natural action of $W$ on the space of Whittaker vectors.
We show (see Theorem \ref{main}) that this property
characterizes uniquely the operators $\Phi_w$.
\end{itemize}
\subsubsection{Whittaker map}
Fix a non-degenerate character $\Psi$ of $U^{op}$.
The map
\begin{equation}\label{def:W:map}
\mW_\Psi:\mS_c(X)\rightarrow \mS_c(T), \quad
\mW_\Psi(f)(t)=\int\limits_{U^{op}} \theta(t) f([u]) \Psi^{-1}(u) du,
\end{equation}
 defines an isomorphism $\mS_c(X)_{U^{op},\Psi}\simeq \mS_c(T)$. 

We define an action of $W$ on $\mS_c(T)$.
For split groups  set
$$w\cdot \varphi(t)=\varphi(t^w).$$
For quasi-split groups see  Definition \ref{qs:action}. 

We define (see \ref{def:S0:general}) a $G\times T$ submodule $\mS_0(X)$
that is dense in $L^2(X)$ and put
$$\mS_0(T)=\mW_\Psi(\mS_0(X))\simeq \mS_0(X)_{U^{op},\Psi}.$$

There is a natural map
$\kappa_{\Psi}:\End_G(\mS_0(X))\rightarrow \End_{\C}(\mS_0(X)_{U^{op},\Psi})=
\End_{\C}(\mS_0(T))$
such that for every $\mB\in \End_G(\mS_0(X))$
the following diagram is commutative.
$$
\begin{diagram}
\node{\mS_0(X)}
\arrow{e,t}{\mB}
\arrow{s,l}{\mW_\Psi}
\node {\mS_0(X)}
\arrow{s,r}{\mW_\Psi} \\
\node{\mS_0(T)}
\arrow{e,t}{\kappa_\Psi(\mB)} 
\node{\mS_0(T)}
\end{diagram}
$$

We prove in Proposition \ref{injective:general} that 
the map $\kappa_\Psi$ is injective.

\subsubsection{Main Theorem}
With this notation  we formulate our main result.  
\begin{Thm}
  \label{main}
  There exists unique family of unitary operators
  $\Phi_w, w\in W$, on $L^2(X,\omega_X)$, preserving the space
  $\mS_0(X)$ and  satisfying:
  \begin{equation}
    \label{request}
    \left\{
    \begin{array}  {ll}
      \Phi_w\circ \theta(g,t^w)=
      \theta(g, t)\circ \Phi_w &  \forall w\in W, t\in T, g\in G\\
      \Phi_{w_1}\Phi_{w_2}=\Phi_{w_1w_2} &  \forall w_1, w_2 \in W\\
      \kappa_\Psi(\Phi_w)(\varphi)=w\cdot \varphi &
      \forall w\in W, \varphi\in \mS_0(T)
    \end{array}
    \right.
\end{equation}
\end{Thm}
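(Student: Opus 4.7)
The plan is to establish uniqueness first, then construct $\Phi_w$ for simple reflections $s_\alpha$, extend to arbitrary $w\in W$ via reduced decompositions, and use uniqueness again as the key tool for verifying the braid relations.

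For uniqueness, suppose $\Phi_w$ and $\Phi_w'$ both satisfy the three listed conditions. Setting $t=1$ in the first condition shows that the difference $\Phi_w-\Phi_w'$ lies in $\End_G(\mS_0(X))$. The third condition forces $\kappa_\Psi(\Phi_w-\Phi_w') = 0$, so Proposition \ref{injective:general} on the injectivity of $\kappa_\Psi$ gives $\Phi_w = \Phi_w'$. The same argument, applied to a product of already constructed operators and a candidate $\Phi_{w_1 w_2}$, will ultimately yield the braid relations.

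For existence at simple reflections, I would work one simple root $\alpha$ at a time. Let $\mathbf{G}_\alpha\subset \mathbf{G}$ be the rank-one (quasi-split) subgroup generated by the root subgroups of $\pm\alpha$ and the corresponding part of $\mathbf{T}$. Its basic affine space embeds into $X$ via a $P_\alpha$-equivariant fibration, where $P_\alpha$ is the standard parabolic attached to $\alpha$. When $\alpha$ is a Galois-fixed simple root, $\mathbf{G}_\alpha$ is isomorphic to $\SL_2$ and the classical $\SL_2$-Fourier transform furnishes a unitary involution $\phi_\alpha$ on the corresponding rank-one $L^2$-space satisfying the three conditions in rank one. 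When $\alpha$ lies in a nontrivial Galois orbit of simple roots, $\mathbf{G}_\alpha$ is a quasi-split rank-one group of $\SU(3)$-type, and the normalized Fourier transform on the cone in its minimal representation, which is exactly the operator $\Pi(r)$ produced in Theorem \ref{savin:intro} applied to the relevant quasi-split orthogonal group, plays the role of $\phi_\alpha$; its explicit restriction to $\mS_c(C_0)$ given by $\Phi = T_1+T_2$ allows an explicit verification of the Whittaker covariance. One then induces $\phi_\alpha$ to an operator $\Phi_{s_\alpha}$ on $L^2(X,\omega_X)$ by tensoring with the identity on the fiber and using $P_\alpha$-equivariance; preservation of $\mS_0(X)$ follows from the analogous property in the rank-one case, and the three conditions for $w=s_\alpha$ are inherited.

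For a general $w\in W$, I would pick a reduced expression $w=s_{\alpha_1}\cdots s_{\alpha_\ell}$ and put $\Phi_w := \Phi_{s_{\alpha_1}}\cdots\Phi_{s_{\alpha_\ell}}$. The first and third conditions for $w$ follow by iterating those for simple reflections, since the Weyl group action $w\cdot$ on $\mS_0(T)$ from Definition \ref{qs:action} is itself built multiplicatively from simple reflections. Independence of the chosen reduced expression is then a consequence of uniqueness: if $s_{\alpha_1}\cdots s_{\alpha_\ell}=s_{\beta_1}\cdots s_{\beta_\ell}$ is another reduced expression for $w$, both products satisfy conditions $(1)$ and $(3)$ for $w$, and they are $G$-equivariant, so by the injectivity of $\kappa_\Psi$ they coincide. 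Once $\Phi_w$ is well-defined on reduced expressions, the relation $\Phi_{w_1}\Phi_{w_2}=\Phi_{w_1w_2}$ with $\ell(w_1 w_2)=\ell(w_1)+\ell(w_2)$ is automatic, and the general case is obtained by combining with $\Phi_{w^{-1}} = \Phi_w^{-1}$, itself forced by uniqueness applied to $w w^{-1}=e$.

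The main obstacle is the rank-one construction for the Galois-nonfixed simple roots: everything in the split setting of Gelfand-Graev relied on an explicit $\SL_2$ Fourier transform, and in the quasi-split setting this is replaced by the nontrivial analysis of the minimal representation of a quasi-split orthogonal group, summarized in Theorem \ref{savin:intro}. Once $\Phi_{s_\alpha}$ is built and shown to have the correct Whittaker image for each simple $\alpha$, the braid relations come almost for free from the uniqueness mechanism, which is the main payoff of the injectivity of $\kappa_\Psi$.
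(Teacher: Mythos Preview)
Your proposal is correct and follows essentially the same approach as the paper: uniqueness via injectivity of $\kappa_\Psi$, construction of $\Phi_{s_a}$ by inducing the rank-one operator (classical Fourier transform for $SL_2$, the cone Fourier transform $\Pi(r)$ for $SU_3$) through the parabolic $P_a$, and then passing to general $w$ by multiplying simple-reflection operators and invoking injectivity of $\kappa_\Psi$ again. One small simplification the paper makes: rather than restricting to \emph{reduced} expressions and then separately arguing the non-length-additive case via $\Phi_{w^{-1}}=\Phi_w^{-1}$, the paper defines $\Phi_w$ via an \emph{arbitrary} presentation $w=s_{a_1}\cdots s_{a_n}$; since $\kappa_\Psi(\Phi_{s_{a_1}}\cdots\Phi_{s_{a_n}})=w\cdot$ depends only on $w$ and $\kappa_\Psi$ is injective, independence of presentation and the full multiplicativity $\Phi_{w_1}\Phi_{w_2}=\Phi_{w_1w_2}$ follow in one stroke, with no need to treat the length-additive and non-additive cases separately.
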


Let us sketch the proof. 
\begin{enumerate}
\item
  First consider a quasi-split, almost simple,
  simply-connected group $G_1$ of rank one. The group $G_1$ is
  isomorphic to either $\Res_LSL_2$ or $\Res_LSU_3$
  for a finite extension $L$ of $F$.
  Without loss of generality  we can assume that $L=F$.
  In both cases the Weyl group $W=\{e,s\}$ consists of two elements.
  We shall define the generalized
  Fourier operator $\Phi_s$, separately for these two cases.

\begin{itemize}\item
In the  case $G_1=SL_2$ the set $X$ can be  identified with
$V-0$ for a symplectic two dimensional plane $V$. In this case
$\Phi_s\in \Aut(L^2(X))=\Aut(L^2(V))$ is defined to be the
classical Fourier transform with respect to the symplectic form on $V$.
Theorem \ref{main} in this case is proven in Section \ref{sec:SL2}.
\item In the case  $G_1=SU_3$, the set $X$ can be  identified
  with the set of non-zero isotropic vectors in a $6$
  dimensional quadratic space.
  The treatment of this case is the crux of the paper. 
In \cite{GurevichKazhdan} we have  defined a unitary operator
$\Phi\in L^2(X)$ of order $2$, commuting with $G_1$ and
anti-commuting with $T'$, and provided an explicit formula
for the  restriction of $\Phi$ to the space
$\mS_c(X)$. We put $\Phi_s=\Phi$
and prove Theorem  \ref{main} in this case  in Section \ref{sec:SU3}.
\end{itemize}
\item
For a general quasi-split group  $G$ and  any simple reflection
$s$ we, using the results for groups of rank $1$, define a unitary involution
$\Phi_{s}\in \Aut(L^2(X)),$ satisfying
 $$   \left\{
    \begin{array}  {ll}
      \Phi_{s}\circ \theta(g,t^s)=\theta(g, t)\circ \Phi_{s} &
      \forall  t\in T, g\in G\\
      \kappa_\Psi(\Phi_s)(\varphi)=s\cdot \varphi & \forall \varphi\in \mS_0(T) 
    \end{array}
    \right..$$
\item
For arbitrary $w\in W$ with a presentation
$w=s_{1}\cdot s_{2}\cdot \ldots \cdot s_{n}$
as a product of simple reflections 
we define
$\Phi_w=\Phi_{s_1}\circ \Phi_{s_2} \ldots \circ \Phi_{s_n}.$
Hence the operators $\Phi_w$ are unitary and possess
the desired equivariance properties. It remains to prove
that $\Phi_w$ does not depend on the presentation. 
For every $\varphi\in \mS_0(T)$ one has
$\kappa_\Psi(\Phi_w)(\varphi)=w\cdot \varphi$ and so $\kappa_\Psi(\Phi_w)$
does not depend  on the presentation of $w$. Since $\kappa_\Psi$ is injective,
the operator $\Phi_w$ does not depend on the presentation of $w$ as well.
In particular, $\Phi_{w_1}\circ \Phi_{w_2}=\Phi_{w_1w_2}$ for $w_1,w_2\in W$
and the operators $\{\Phi_w, w\in W\}$ satisfy \ref{request}. 
\end{enumerate}

\begin{Remark} We expect that a similar strategy can be applied
  to prove Theorem \ref{main} for $F=\bR$.
\end{Remark}
  
\subsection*{Acknowledgment}
The research of the second author is
partially supported by the ERC grant No $669655$.
We thank the referee for careful reading of the paper
and pointing several inaccuracies in the first version. 

\section{On the space $\mS_0(X)$}\label{sec:S0}

In \cite{BravermanKazhdan} the authors have defined for split groups
the spaces $$\mS(X)=\sum_{w\in W} \Phi_w(\mS_c(X)), \quad \mS^0(X)=\cap_{w\in W} \Phi_w(\mS_c(X)).$$
In particular 
$$\mS^0(X)\subset \mS_c(X)\subset \mS(X)\subset L^2(X,\omega_X)$$
and the  spaces $\mS^0(X), \mS(X)$ are preserved by the family of
operators ${\Phi_w,w \in W}$.
The space $\mS(X)$, called  Schwartz space, is potentially
important for  construction of integral representations of $L$-functions.

The description of the Schwartz space $\mS(X)$ explicitly is a deep problem.
For example for $G=SL_2$ one has $\mS(X)=\mS_c(V)$ and
for $G=SU_3$ the space $\mS(X)$ can be identified with
the space of smooth vectors in the unitary minimal representation
of a group $SO(8)$ containing $SU_3$ inside its Levi subgroup
$GL_1\times SO(6),$ see \cite{GurevichKazhdan}.

The space $\mS_0(X)$ in this paper is contained in  $\mS^0(X)$.
Let us highlight its useful properties:
\begin{itemize}
  \item It is explicitly given as an intersection of
    kernels of certain partial Mellin transforms.
  \item The Fourier transforms corresponding to simple reflections
    preserve this space and 
    can be written as integral operators with explicitly
    given continuous kernels.
  \item The family $\{\Phi_w\}$ is unique for given $\mS_0(X).$
\end{itemize}

  On the other hand this space is  not canonical and can easily be replaced
by other subspaces in $\mS_c(X)$, dense in $L^2(X,\omega_X)$
and preserved by $\Phi_w$, for example by $\mS^0(X).$ 

The space $\mS_0(X)$ will be defined separately for the
groups of rank one, and, based on this, for general group.

The density of $\mS_0(X)$ in $L^2(X,\omega_X)$ is the consequence
of Proposition \ref{denseness} below.

Consider a finite set
$$\bB=\{(L_i,a_i , \chi_i),\quad 1\le i \le k\},$$
  where $L_i$ is a finite extension of $F$,
  $a_i:L_i^\times \hookrightarrow T$ is an embedding and 
  $\chi_i$ is a character of $L_i^\times$. 
  For each $(L_i,a_i,\chi_i)$ consider a  partial Mellin transform
  $P_i:\mS_c(X)\rightarrow \mS^\infty(X)$ defined by
  $$P_i(f)=\int\limits_{L_i^\times} \theta(a_i(y))f \,\,\chi_i(y) d^\times y.$$

  Define $\mS_{\bB}(X)=\cap_{i=1}^k \Ker(P_i).$ It is a
  $G\times T$ invariant subspace of $\mS_c(X)$.
  
  The following proposition will be repeatedly used in the paper.
  \begin{Prop}\label{denseness}
The space $\mS_\bB(X)$ is dense in $L^2(X,\omega_X)$. 
\end{Prop}

  \begin{proof}
    Let us prove this first for the case all the characters $\chi_i$
    are not unitary. Precisely assume that all $\chi_i$ satisfy
$|\chi_i|=|\cdot|^{b_i}$ with   real $b_i\neq 0$ for all $i$.  
Let $b=\min(|b_i|)>0$. 

To show that the space $\mS_\bB(X)$ is dense, assume
existence of a non-zero function $f\in \mS_\bB(X)^\perp\subset L^2(X,\omega_X).$
Since $\mS_c(X)$ is dense in $L^2(X)$
there exists  a function $g\in \mS_c(X)$ such that $\la f, g\ra\neq 0$.
      
  Denote by $\varpi_i$ an uniformizer of $L_i$. 
  For any $n\in \mathbb N$  define operators $E^i_n, E_n$   on $\mS_c(X)$ by
$E_n=\Pi_{i=1}^k E^i_n,$ where
  $$E^i_n=\left\{
  \begin{array}{ll} \Id-\theta(a_i(\varpi_i)^n)\chi_i(\varpi_i^n)& b_i>0\\
    \Id-\theta(a_i(\varpi_i)^{-n})\chi_i(\varpi_i^{-n})& b_i<0\end{array}.
    \right. 
$$
    Clearly, $E_n(g)\in \mS_B(X)$. Set $g_n=g-E_n(g)$.
Note that  $|\chi_i(\varpi_i)|$ (resp. $|\chi_i(\varpi_i^{-1})|$)
  is bounded by $q^{-b}$ for $b_i>0$ (resp. $b_i<0$) for any $i$.
 Moreover the  action $\theta(a_i(\varpi_i))$ is unitary.
 This implies  $\|g_n\|\le q^{-nb}(2^k-1)\|g\|.$
Hence
$$0\neq |\la g,f\ra|=|\la g_n,f\ra|\le q^{-nb}(2^k-1)
\|g\|\cdot \|f\|\to 0$$
as $n\to \infty$, which is a contradiction.

Now let us treat the general set of characters $\bB$.
For any compact subset $\mK$ in $X$ let $\mS_\bB(X;\mK)$ be
the space of functions in $\mS_\bB(X)$ supported on $\mK$. Obviously,
    $\mS_\bB(X)=\cup_\mK \mS_\bB(X;\mK)$. 

Since the action of $T$ on $X$ is free, for any character
 $\chi$ of $T$ there exists a  smooth function
$h$ on $X$, such that
$$h([t^{-1}g])=\chi(t)h([g]), \quad h([g])\neq 0, \quad \forall [g]\in X,
t\in T.$$
Multiplication on $h$ defines a $T$-equivariant isomorphism between
    $\mS_\bB(X)$ and $\mS_{\bB'}(X),$ where
$\bB'=\{(L_i, a_i, \chi_i\cdot \chi\circ a_i)\},$
which is also homeomorphism  between $\mS_\bB(X;\mK)$ and
$\mS_{\bB'}(X;\mK)$ for all compact $\mK\subset X$.
Hence $\mS_\bB(X)$ is dense if and only if $\mS_{\bB'}(X )$ is dense in
$L^2(X, \omega_X)$.
By choosing appropriate $\chi$ we can ensure that $\bB'$ does not contain
unitary characters. We are done. 

\end{proof}

\section{$G_1=SL_2$}\label{sec:SL2}

Let $(V, \la\cdot,\cdot\ra_V)$ be a two dimensional symplectic space
with the standard basis $e_1,e_2$ such that $\la e_1,e_2\ra_V=1$.

The group $G_1$ acts on $V$ on the right, preserving the symplectic
form. Let $B_1=T_1\cdot U_1$
be the Borel group, stabilizing the line $Fe_2$. The 
space $X=U_1\backslash G_1$ is identified with
$V-0$ via $[g]\mapsto e_2g$.
The $G_1$-invariant measure $\omega_X$ on $X$ is fixed to be
the self-dual measure $|dv|$ on $V$ with respect to
the additive character $\psi$ and the symplectic form on $V$.

The Fourier transform $\Phi\in \Aut(\mS_c(V))$ is defined
by the formula
$$\Phi(f)(w)=\int\limits_V f(v)\psi(\la w,v\ra_V) dv.$$

The following properties of $\Phi$ are well-known:

\begin{Prop}\label{SL2:Phi:properties}
  \begin{enumerate}
    \item
 $\Phi$ extends to a unitary involution on $L^2(V, |dv|)=L^2(X,\omega_X)$
\item
  $\theta(t,g)\circ \Phi=
  \Phi\circ \theta(t^{-1},g)$ for all $(t,g)\in T_1\times G_1.$
  \end{enumerate}
  \end{Prop}

For a function $f$ on $X$ the argument will be denoted either
as a class $[g]$ or as a vector  $(x,y)=xe_1+ye_2\in V-0$. 

We define certain typical elements of $G_1:$
$$x(r)=\left(\begin{array}{cc} 1& 0 \\r & 1 \end{array}\right),
\,\,
\,\,
t(a)=\left(\begin{array}{cc} a& 0 \\0 & a^{-1} \end{array}\right),
\,\,
n_s=\left(\begin{array}{cc} 0& -1 \\1 & 0 \end{array}\right).
$$

One has $\alpha(t(a))=a^2$ for the unique positive root $\alpha$
of $G_1$ with respect to $T_1$.

\subsection{The space $\mS_0(X)$}\label{def:S0:SL2}
Let $\bB$ be the set of  two triples
$$\bB=\{(F^\times, t:F^\times \rightarrow T_1, \chi_{\pm}(y)=|y|^{\pm 1})\}.$$
We define $\mS_0(X)$ to be $\mS_{\bB}(X),$ see section \ref{sec:S0}
for the definition.
It is obviously a $G_1\times T_1$ representation and is dense in 
in $L^2(X,\omega_X)$ by Proposition \ref{denseness}.

\begin{Prop}\label{SL2:S0} 
  The operator $\Phi$ preserves $\mS_0(X)$. 
  \end{Prop}

\begin{proof}
  First note, that for any $f\in \mS_0(X),$
  the function  $\Phi(f)$ belongs to $\mS_c(X)$. Indeed,
  the germ $[\Phi(f)]_0$ of $\Phi(f)$  at zero is
constant and equals 
   $$[\Phi(f)]_0=\int\limits_V f(v) dv=
 \int\limits_F \int\limits_{F^\times} \theta(t(x))f(1,y)|x| d^\times x dy=
 \int\limits_F P(\chi_+)(f)(1,y) dy=0.$$

For any character $\chi$ of $T_1$ one has
 $$P(\chi)(\Phi(f))(v)=
 \int\limits_{T_1} \theta(t)\Phi(f)(v)\chi(t) dt=
 \int\limits_{T_1} \Phi(\theta(t^{-1})f)(v)\chi(t) dt.$$

 Since $f$ is of compact support, the integral defining
 $\Phi(f)$ is taken over a compact set in $X$,
 and hence the integral over $T_1$ can also be replaced by an
 integral over a compact set. By interchanging the order of integration
 we see that if $f\in \Ker P(\chi^{-1})$ then
 $\Phi(f)\in \Ker P(\chi)$. 
 
Hence for $f\in \mS_0(X)$ the function
$\Phi(f)$ belongs to $\mS_0(X).$ This proves the Lemma.
 
\end{proof}

\subsubsection{The Whittaker map}
We fix a character  $\Psi$ on $U^{op}_1$  by ${\Psi(x(r))=\psi(r)}$.
The Whittaker map $\mW_\Psi:\mS_c(X)\rightarrow \mS_c(T_1)$ is defined by
$$\mW_\Psi(f)(t)=\int\limits_{U^{op}_1} \theta(t)f([u]) \Psi^{-1}(u) du.$$

The map $\mW_\Psi$ defines an isomorphism
$\mS_0(X)_{U^{op}_1,\Psi}\simeq \mS_0(T_1),$
where  $\mS_0(T_1)=\mW_\Psi(\mS_0(X))$, which induces the  map 
$$\kappa_\Psi:\End_{G_1}(\mS_0(X))\rightarrow
\End_\C(\mS_0(X)_{U^{op},\Psi})=\End_\C(\mS_0(T_1)).$$

\begin{Lem}\label{SL2:kappa:injectivity}
  $\kappa_\Psi$ is injective.
\end{Lem}
\begin{proof} See the proof of \ref{injective:general}
  for a general quasi-split $G$. 
\end{proof}

\begin{Def}\label{SL2:W:action}
  We define an action of $W$ on $S_c(T_1)$ by
  $$s\cdot \varphi(t)=\varphi(t^s), \quad \varphi\in \mS_c(T_1).$$
\end{Def}

\begin{Prop}\label{SL2:kappa}
  For any $\varphi\in \mS_0(T_1)$ one has
  $\kappa_\Psi(\Phi)(\varphi)=s\cdot \varphi.$
\end{Prop}

\begin{proof}
Any function in $\mS_0(T_1)$ is of the form $\mW_\Psi(f)$ for $f\in \mS_0(X)$.
It is enough to show that
\begin{equation}
  \mW_\Psi(\Phi(f))(1)=\mW_\Psi(f)(1).
  \end{equation}
Indeed, once this is proven one has for $t\in T_1$
$$\mW_\Psi(\Phi(f))(t)=\mW_\Psi(\theta(t)\Phi(f))(1)=
\mW_\Psi(\Phi(\theta(t^s)f)(1)=$$
$$\mW_\Psi(\theta(t^s)(f))(1)=\mW_\Psi(f)(t^s).$$

There is a injective map with open dense image
$$j: T_1\times U^{op}_1\rightarrow X, \quad j(t,u)= [t^{-1} u]$$
and the push-forward of the measure $\delta_B(t) d t\, d u$ on
$T_1\times U_1^{op}$
equals $dv$.

\begin{equation}\label{SL2:Phi}
\Phi(f)([g])=
\int\limits_{U^{op}_1}
\int\limits_{T_1}f([t^{-1}  u ])
\psi(\la [g], [t^{-1}   u]\ra_V) \delta_B(t) d t du.
\end{equation}
Hence
  $$\mW_\Psi(\Phi(f))(1)=
  \int\limits_{U^{op}_1} \Phi(f)([u]) \Psi(u)^{-1}du=$$
$$\int\limits_{U^{op}_1} \int\limits_{U^{op}_1} \int\limits_{T_1}
  f([t^{-1}u']) \psi(\la [u], [t^{-1}u']\ra_V)
  \Psi(u^{-1})   \delta_B(t) dt\, du'\,du=$$
$$\int\limits_{U^{op}_1}  \int\limits_{T_1}\left(\int\limits_{U^{op}_1}
f([t^{-1} u'])\Psi(u'^{-1}) du'\right)
\psi(\la [1],[t^{-1} u]\ra_V ) \Psi(u)  \delta_B(t) dt\, du= $$
$$\int\limits_{U^{op}_1}  \int\limits_{T_1} \mW_\Psi(f)(t)
\psi(\la [1],[t^{-1} u]\ra_V) \Psi(u)  \delta^{1/2}_B(t) dt\, du.$$

Put $t=t(b)$ and $u=x(r)$ and notice that
$\la [1],[t^{-1}u]\ra_V=-br$. Then

$$\mW_\Psi(\Phi(f))(1)=
\int\limits_{F}
\left(\int\limits_{F}\mW_\Psi(f)(t(b))\psi(-br) db\right)  \psi(r) dr=$$
$$\int\limits \mF_\psi(\mW_\Psi(f))(-r)\psi(r) dr=\mW_\Psi(f)(1),$$ 
where $\mW_\Psi(f)$ is considered as a function on $\mS_c(F^\times )$
via $b\mapsto \mW_\Psi(f)(t(b))$ and 
$\mF_\psi:\mS_c(F)\rightarrow \mS_c(F)$ denotes the one-dimensional
Fourier transform with respect to $\psi$ and the self-dual measure $dx$ on $F$.
   
\end{proof}

\begin{Thm}\label{main:sl2}
There exists a unique unitary operator
$\Phi_s\in \Aut(L^2(X,\omega_X)),$ that preserves the space $\mS_0(X)$ 
and satisfies
\begin{equation}\label{request:sl2}
  \left\{
 \begin{array}{ll} 
   \theta(g,t)\circ \Phi_s=\Phi_s\circ \theta(g,t^s)& g\in G_1, t\in T_1\\
   \Phi_s\circ \Phi_s=\Id\\
   \kappa_\Psi(\Phi_s)(\varphi)=s\cdot \varphi  & \varphi\in \mS_0(T_1)  
\end{array}\right.
\end{equation}
\end{Thm}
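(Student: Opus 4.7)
The plan is to take $\Phi_s$ to be the classical symplectic Fourier transform on $V$. Let $dv$ be the Haar measure on $V$ self-dual with respect to the pairing $(u,v) \mapsto \psi([u,v])$, and define
\[
\Phi_s f(v) = \int_V f(u)\,\psi([u,v])\,du.
\]
Since $X = V \setminus \{0\}$ and the origin has measure zero, this is naturally a bounded operator on $L^2(X,\omega_X)$ once $\omega_X$ is identified with $dv$ (the unique $SL_2$-invariant scalar). Unitarity follows from Plancherel, and the antisymmetry of $[\cdot,\cdot]$ combined with self-duality gives $\Phi_s \circ \Phi_s = \mathrm{Id}$ via the identity $\int_V \psi([u-w,v])\,dv = \delta_{u=w}$.

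Next I would check the equivariance. Since every $g \in SL(V)$ preserves both $[\cdot,\cdot]$ and $dv$, a direct change of variable yields $\Phi_s \circ \theta(g,1) = \theta(g,1) \circ \Phi_s$, matching $\theta(g,1^s) = \theta(g,1)$. For $T$, the substitution $u \mapsto a^{-1}u$ in the defining integral gives $\Phi_s \circ \theta(t(a)) = \theta(t(a^{-1})) \circ \Phi_s$, and since $t(a)^s = t(a^{-1})$ this is precisely what is required. For the Whittaker compatibility, I would substitute the formula for $\Phi_s$ into $\mW_\Psi(\Phi_s f)(t(a))$; Fubini turns the inner integral into $\int_F \psi(y(x - a^{-1}))\,dy$, which by self-duality of the additive measure collapses to a delta distribution at $x = a^{-1}$, leaving $|a|^{-1}\int_F f(a^{-1},y)\,\psi(-ay)\,dy$. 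Computing $s \cdot \mW_\Psi(f)(t(a)) = |a|^{-2}\mW_\Psi(f)(t(a^{-1}))$ directly from the definition yields the same expression.

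Preservation of $\mS_0(X)$ and uniqueness come last. The $T$-equivariance formally produces the intertwiners $P_\alpha \circ \Phi_s = \Phi_s \circ P_{-\alpha}$ and $P_{-\alpha} \circ \Phi_s = \Phi_s \circ P_\alpha$, so the two Mellin kernels are interchanged and the intersection $\mS_0(X) = \ker P_\alpha \cap \ker P_{-\alpha}$ is stable at the level of vanishing conditions. The only genuine question is whether $\Phi_s f$ lies in $\mS_c(V \setminus \{0\})$ rather than merely in $\mS_c(V)$: in the non-archimedean setting the Fourier transform does preserve $\mS_c(V)$, so the remaining issue is vanishing of $\Phi_s f$ at the origin, i.e.\ $\int_V f(u)\,du = 0$. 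This follows by decomposing the integral along the fibration $V \setminus \{0\} \to \mathbb{P}^1$: writing $du = |a|^2\,d^\times a\,d\mu$ on the open orbit gives $\int_V f\,du = \int_{\mathbb{P}^1} P_\alpha(f)\,d\mu$, which vanishes because $P_\alpha f = 0$. For uniqueness, if $\Phi'_s$ is a second candidate then $\Phi_s \circ \Phi'_s$ is genuinely $G \times T$-equivariant (the two $s$-twists compose to the identity) and satisfies $\kappa_\Psi(\Phi_s \circ \Phi'_s) = \mathrm{Id}$, so by injectivity of $\kappa_\Psi$ it equals the identity on $\mS_0(X)$; the involutivity of $\Phi'_s$ then gives $\Phi_s = \Phi'_s$, and density of $\mS_0(X)$ in $L^2(X)$ combined with unitarity extends the equality to all of $L^2(X)$. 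The main technical point, where I expect the real work to lie, is precisely this translation of the Mellin vanishing $P_\alpha f = 0$ into the honest integral identity $\int f = 0$.
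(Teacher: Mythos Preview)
Your proposal is correct and follows essentially the same approach as the paper: the operator is the symplectic Fourier transform on $V$, preservation of $\mS_0(X)$ is reduced to the vanishing $\int_V f\,dv=\int_F P_\alpha(f)(1,r)\,dr=0$, and uniqueness comes from injectivity of $\kappa_\Psi$. The only difference is cosmetic: you compute $\mW_\Psi(\Phi_s f)(t(a))$ directly at a general $a$ via Fourier inversion in the $r$-variable, whereas the paper first rewrites $\Phi_s$ as an integral over $T\times U$ (its Lemma~\ref{ker:T:SL2}), evaluates at $t=e$, and then transports to general $t$ by equivariance --- a formulation it later reuses for the general quasi-split case.
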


\begin{proof}
  The injectivity of $\kappa_\Psi$ implies the uniqueness of the operator
  $\Phi_s$, hence  it is enough to construct such an operator. 
  
  We define  $\Phi_s$ to be $\Phi$. The properties follow from
  Propositions \ref{SL2:Phi:properties}, \ref{SL2:S0}, \ref{SL2:kappa}.

\end{proof}

\section{$G_1=SU_3$}\label{sec:SU3}
\subsection{The structure and compatibility of measures}
\subsubsection{The field}
Let $K$ be a quadratic field extension over $F$ with the Galois
involution $x\mapsto \bar x$, the norm $\N$ and the trace $\T$.
We write $|\cdot|_K$ for the absolute value on $K$, such that 
$|x|_K=|\N(x)|_F$. We fix an element $\tau \in \mO_F$
such that $\mO_K=\mO_F+\st \mO_F.$

The space $K$ admits a quadratic form $x\mapsto \N(x)$ and
the associated  bilinear form on $K$ is $(x,y)\mapsto \T(x\bar y).$

We fix on $K$ a self dual measure $dx$ with respect to
$\psi$ and $\N$. The Fourier transform on $K$ is denoted by
$\mF_{\psi,K}$, to distinguish it from the Fourier transform
$\mF_\psi$ with respect to $\psi$ and the self-dual measure on $F$.

\subsubsection{The unitary group}

Let $(\bW,h)$ be  the following Hermitian space 
$$\bW=K^3, \quad   h(v_1,v_2)=x_1\bar z_2+ y_1\bar y_2+z_1\bar x_2,
\quad v_i=(x_i,y_i,z_i).$$

The group $G_1=SU(\bW,h)$ is the group of  automorphisms of $\bW,$
acting on the right, preserving the Hermitian form $h$
and having determinant $1$. Its elements are $3\times 3$ matrices
over $K$.

We denote by $B_1=T_1\cdot U_1$ the Borel subgroup of $G_1,$
preserving the line $K(0,0,1)$ in $\bW$. The unipotent radical
$U_1$ is the stabilizer of the vector $(0,0,1)$.
The space $X=U_1\backslash G_1$ is naturally identified with the set
$\bW^0$ of $h$-isotropic non-zero vectors in the space $\bW$.
We write $T'$ for the maximal split torus of $T_1$.

\subsubsection{The measures}
The space $\bW$ with $\dim_F(\bW)=6$
admits the $F$-bilinear form $\la v_1,v_2\ra= \T h(v_1,v_2)$
and the corresponding   quadratic form $q$ is given by
$$q(v)=\la v,v\ra /2=\T(x\bar z)+\N(y),\quad v=(x,y,z).$$

We fix the self-dual measure $dw$ on $\bW$  with respect to
$\psi$ and $q$. It gives rise to a measure on the cone
$\bW^0$ and hence to a measure on $X$ which we denote by $\omega_X$.



We fix  bijections
$$x:K\times \sqrt\tau F\rightarrow U_1^{op}, \quad t: K^\times \rightarrow T_1$$  by
$$x(r,y)=\left(\begin{array}{ccc} 1& 0 & 0\\
-\bar r& 1 &0 \\
-\frac{\N(r)}{2}+y&  r & 1 \end{array}\right)
\quad t(a)=\left(\begin{array}{ccc} a& 0&0 \\
0& a^{-1}\bar a& 0\\ 
0&  0 & \bar a^{-1} \end{array}\right)
\quad a\in K^\times.  $$
We also fix a  representative $n_s$ of the Weyl element $s$ by
$$ 
n_s=\left(\begin{array}{ccc} 0&0& 1 \\  0& -1&0\\1 & 0 &0 \end{array}\right).
$$

The  Haar measures on $K\times \sqrt \tau F$ and $K^\times$
define the measures on $U_1^{op}$ and $T_1$ respectively. 

By Bruhat decomposition for $G_1,$ there is an embedding
$j: T_1\times U_1^{op}\rightarrow X$ with dense image, defined by
$j(t,u)=[t^{-1}u]$.

It is straightforward to check that for any $f\in \mS_c(X)$ one has
$$\int\limits_X f(v) \omega_X(v)=
\int\limits_{K}\int\limits_{\sqrt{\tau}F} \int\limits_{K^\times}
f([t(b)^{-1} x(r,y)]) |\N(b)|^2 d^\times b\,dy\, dr.$$

The root system with respect to the torus $T'$ is 
$$R(G_1,T')=\{\pm \alpha, \pm 2\alpha\},\quad\alpha(t(a))=a,
\quad \forall a\in F^\times.$$

The operator $\Phi_s$ for the group $G_1$ is defined using
the normalized Radon transform on the cone $X$. Below we recall the definition
and the relevant properties. We refer to \cite{GurevichKazhdan} for  proofs.

\subsection{Mellin transform}
Let $\chi$ be a character of $\mO^\times$,  extended
to $F^\times$ by setting $\chi(\varpi)=1$.
We write $\chi_s$ for the character $\chi|\cdot|^s$
of $F^\times$. The character $\chi_s$ is lifted to the character
of $T'\simeq F^\times$ via isomorphism $t(x)\mapsto x$. 

Define the Mellin transform 
$P(\chi,s):\mS_c(X)\rightarrow \mS^\infty(X)$  along $T'$ by
$$P(\chi,s)=\int\limits_{T'} \theta(t)f \chi_s(t) dt.$$ 
The image $\mS(\chi,s)$ consists of functions $f\in \mS^\infty(X)$
satisfying ${\theta(t)f=\chi_s^{-1}(t)f}$. 

The Mellin transform can be also computed on functions on $\mS^\infty(X),$
not necessarily of compact support, provided the integral converges.

The following statement is obvious and will be used later. 

\begin{Lem}\label{Mellin:factor}
 Let $G:\mS_c(X)\rightarrow \mS(\chi,s),$
  such that $G\circ\theta(t)=\theta(t^{-1})\circ G$ for all $t\in T'$.
Then $\Ker G$ contains $\Ker P(\chi^{-1},-s).$  
\end{Lem}

\subsection{The Radon transform}
Recall that $X$ can be identified with the space $\bW^0$ of non-zero isotropic
vectors in $\bW$.  In this section elements in $X$ will be denoted
by $u,v,w\ldots$, isotropic vectors
in $\bW$.

For any  vector
$w\in \bW^0=X$ consider an algebraic map
$$p_w:\bW^0\rightarrow F, \quad p_w(v)=\la v,w\ra.$$
The measure $\omega_X$ defined above 
and the measure  $dx$ on $F$ give rise to well-defined measure
$\omega_{w,a}$ on the fiber ${p^{-1}_w(a)=\{v\in \bW^0, \la v,w\ra=a\}}$
for any $a\in F$.

For any $a\in F$ we define  Radon transform
$\mR(a):\mS_c(X)\rightarrow \mS^\infty(X)$ by
$$\mR(a)(f)(w)=
\int\limits_{p^{-1}_{w}(a)} f(v) \omega_{w,a}(v).$$
The function $a\mapsto \mR(a)(f)(w)$ is continuous, of bounded support. 
The normalized Radon transform on $\mS_c(X)$ is defined by 
$$\hat\mR(f)(w)=\int\limits_F \mR(a)(f)(w)\psi(a) da.$$
In addition set
$$\mR_1(f)(w)=\int\limits_{F^\times} \theta(t(x))f(w)\chi_K(x)d^\times x$$

Below we list the properties of the operators $\mR(a)$
and $\hat \mR$, all proven in \cite{GurevichKazhdan}, section $3$.
The quadratic space $(V_K,q_K)$ in loc. cit. is isomorphic to the
quadratic space $(\bW,q)$ and  the results proven in loc.cit. hold
in our setting. 

For all $f\in \mS_c(X)$, $w\in X$ one has
\begin{enumerate}
  
\item $\mR(xa)(f)(xw)=|x|^{-1}\mR(a)(f)(w)$ for all $x\in F^\times$.
  This implies
  $$\hat\mR(f)(xw)=\int\limits_F \mR(a)(f)(w)\psi(ax) da.$$

\item $\mR(a)\circ \theta(g,t)=\theta(g,t^{-1})\circ \mR(a)$
for $g\in G_1,t\in T'$  and the same is true for $\hat \mR$.

\item There exists a constant $c_{\psi,q}$ such that
  for $|a|$ small enough one has 
  $$\mR(a)(f)(w)=\mR(0)(f)(w)+c_{\psi,q}\chi_K(a)|a|\mR_1(f)(w).$$

\item The function $x\mapsto \theta(t(x))\hat\mR(f)(w)$
  is bounded  for $x\in F$.

\item $\hat \mR(f)$ extends to a locally constant function on
  $\bW^0\cup \{0\}$  whose value at $0$ is $\int\limits_X f(v)\omega_X(v)$.
  
\end{enumerate}

 \begin{Lem}\label{kernel:lemma}
   \begin{enumerate}
     \item
       If $f\in \Ker P(\chi_K,z)$ then
       $\hat\mR(f)\in \Ker P(\chi_K,-z)$
       for $Re(z)>0$.
\item
Let $f\in \Ker P(\chi_K,-1)\cap \Ker P(\chi_K,0).$
For  $w\in X$ the function $a\mapsto \mR(a)(f)(w)$
is of compact support on $F^\times$. 
   \end{enumerate}
 \end{Lem}
 
 \begin{proof}
   \begin{enumerate}
     \item
The transform $P(\chi_K,-z)(\hat \mR(f))(w)$ is well-defined for $Re(z)>0$
by the property $(4)$. The Lemma \ref{Mellin:factor} yields the result.
\item
   By properties $(1)$, $(2)$,  the map $\mR(0)$ has image in $\mS(\chi_0,1)$
   and satisfies the condition of Lemma \ref{Mellin:factor}.
   Similarly, the map $\mR_1$ has image in $\mS(\chi_K,0)$
   and satisfies the condition. Hence for
   $f\in   \Ker P(\chi_K,-1)\cap \Ker P(\chi_K,0)$ one has
   $\mR(0)(f)=\mR_1(f)=0$. By the property $(3),$
the function $\mR(a)(f)(w)$ vanishes
   for small $|a|$ and hence is of compact support on $F^\times$. 
\end{enumerate}
   \end{proof}

 Let us fix terminology for convergence of integrals
 of locally constant functions, not necessary of compact support, on $F^\times$.
 For $f\in \mS^\infty(F^\times)$ we say that $\int\limits_{|x|\le 1} f(x) d^\times x$
 
 \begin{itemize}
 \item converges absolutely if $\int\limits_{|x|\le 1} |f(x)| d^\times x$ converges
 \item converges if  $\lim_{n\to \infty} \int_{|x|\ge q^{-n}} f(x) d^\times x $ exists. 
\item stabilizes if the sequence $\int_{|x|\ge q^{-n}} f(x) d^\times x$ becomes constant for $n>N$.
 \end{itemize}

 Similarly we say that the integral $\int\limits_{|x|> 1} f(x) d^\times x$ converges  absolutely,
 (resp. converges or stabilizes) the integral  if  $\int\limits_{|x|<1} f(x^{-1}) d^\times x$
 converges  absolutely,(resp. converges or stabilizes).

 Given an integral $I=\int_{F^\times} f(x) d^\times x$ we say
 that it stabilizes at zero and converges absolutely
 at infinity if  $\int\limits_{|x|\le  1} f(x) d^\times x$ stabilizes
 and  $\int\limits_{|x|>1} f(x) d^\times x$ converges absolutely. 
 
 For example, for any unitary character $\chi$
 and $Re(s)>0$ the integral $\int_{F^\times}\psi(x)\chi(x)|x|^s d^\times x$ stabilizes
 at infinity and converges absolutely at zero. 
 
\subsection{The operators $\Phi$}
We define an operator 
${\Phi:\mS_c(X)\rightarrow \mS^\infty(X)}$  by
  $$\Phi(f)=\int\limits_{F^\times}
\theta(t(x))\hat\mR(f)\psi(x^{-1}) \chi_K(x)|x|^{-1} d^\times x.$$

By properties $(4),(5)$ the integral converges absolutely.
By property $(2)$ it satisfies the equivariance property for
$G\times T'$.

In \cite{GurevichKazhdan}, using the minimal representation
for the group $O(8)$ we proved

\begin{Thm}\label{Phi:involution}
  The operator $\Phi$
  \begin{enumerate}
    \item has its image in the space of functions of bounded support on $X$.
  \item extends to a unitary involution on $L^2(X,\omega_X),$
  \item satisfies $\theta(g,t)\circ\Phi=\Phi \circ\theta(g,t^s)$ for
  all $g\in G_1, t\in T'$. 
    \end{enumerate}
  \end{Thm}

The operator $\Phi$ is our candidate for Fourier transform.
To prove Theorem \ref{main} for $G_1$ it remains
\begin{itemize}
\item to show that $\Phi$ enjoys the equivariance
  property with respect to $T_1$,
\item to define a space $\mS_0(X)\subset \mS_c(X),$
  preserved by $\Phi$ and dense in $L^2(X,\omega_X)$ and
  \item to compute $\kappa_\Psi(\Phi)$ on the space
$\mS_0(T_1)=\mW_\Psi(\mS_0(X))$. 
\end{itemize}

  \subsection{ The space $\mS_0(X)$}\label{def:s0:su3}
  Define the space $\mS_0(X)=\mS_\bB(X),$
  where $\bB$ is the following
  finite set of characters of $T'\simeq F^\times$:
 $$\bB=\{\chi_K, \chi_K|\cdot|^{\pm 1},|\cdot|^{\pm 2}\}.$$

\begin{Prop}\label{su3:s0:preserved}
    The operator $\Phi$ preserves $\mS_0(X).$ 
\end{Prop}

\begin{proof} We start by showing that for $f\in \mS_0(X)$
  one has $\Phi(f)\in \mS_c(X)$.
  Since $\Phi(f)$ has bounded support, it is enough to show that
  the germ $[\Phi(f)]_0$ at zero vanishes.
  
The operator $\Phi$ can be naturally decomposed as a sum
$\Phi=\Phi_1+\Phi_2$ where
$$\Phi_1(f)(w)=\gamma(\chi_K,\psi)
\int\limits_{F^\times} \theta(t(x))\hat \mR(f)(w)\chi_K(-x)|x|^{-1} d^\times x, $$
and
$$\Phi_2(f)(w)=\gamma(\chi_K,\psi)
\int\limits_{F^\times} \theta(t(x))\hat \mR(f)(w)
(\psi(x^{-1})-1)\chi_K(-x)|x|^{-1} d^\times x. $$

For $f\in \mS_0(X)$ one has $\Phi_1(f)=0$ by \ref{kernel:lemma}.
Let us show that the germ $[\Phi_2(f)]_0$ is zero for $f\in \mS_0(X)$.
The function
$$g(x)=\gamma(\chi_K,\psi)(\psi(x^{-1})-1)\chi_K(-x)|x|$$ has a bounded support,
denote it by $\mB$.
For $|w|$ small enough, the function $x\mapsto \hat \mR(f)(xw)$ is constant
for $x\in \mB$. Hence for $|w|$ small one has
$$\Phi_2(f)(w)=\hat \mR(f)(w)\cdot \int_{F^\times} g(x) dx.$$

By property $(5)$  $\hat \mR(f)(w)=\int\limits_X f(v)\omega_X(v)$
for $|w|$ small enough. 
The map $f\mapsto\int\limits_X f(v)\omega_X(v)$ has its image in
$\mS(\chi_0,|\cdot|^{-2})$. Hence by Lemma \ref{Mellin:factor} 
if $f\in \Ker P(\chi_0,|\cdot|^2)$ then $\hat \mR(f)(w)=0$ for small $w$ and so
$[\Phi_2(f)]_0=0$.  Hence $\Phi(f)$ is of compact support. 

Since $\Phi\circ \theta(t)=\theta(t^{-1})\circ \Phi$ for $t\in T'$
the properties $f\in \Ker P(\chi,s),$ and $\Phi(f)\in \mS_c(X)$
imply $\Phi(f)\in \Ker P(\chi^{-1},-s)$, as in Proposition
\ref{SL2:S0}.
This yields the result. 
\end{proof}
  
  \begin{Prop}\label{su3:kernel}
    For $f\in \mS_0(X)$ one has
    $$\Phi(f)(w)=\int\limits_X f(v)\mL(\la v, w\ra) \omega_X(v),$$
    where for $a\in F^\times$
\begin{equation}\label{def:L}
    \mL(a)=
  \gamma(\chi_K,\psi)
  \int\limits_{F^\times} \psi(ax+x^{-1})\chi_K(-x)|x| d^\times x.
  \end{equation}
\end{Prop}

  \begin{proof}
    For $a\in F^\times$, the integral defining $\mL$ stabilizes both at zero at and infinity. In particular, there exists a compact set $\mK_1$ in $F^\times$
 such that
$$\mL(a)=
  \gamma(\chi_K,\psi)
  \int\limits_{\mK_1} \psi(ax+x^{-1})\chi_K(-x)|x| d^\times x.$$
    
 For $f\in \mS_0(X)$, the function $a\mapsto \mR(a)(f)(w)$
 is of compact support on $F^\times$ by Lemma \ref{kernel:lemma}, part $(2)$. 
 We can assume that the support is contained in $\mK_1$.

By the Fubini theorem 
$$\int\limits_X f(v)\mL(\la v, w\ra) \omega_X(v)=
\int\limits_F \mR(a)(f)(w)\mL(a) da=$$
$$\gamma(\chi_K,\psi)\int\limits_{\mK_1} \int\limits_{\mK_1}
\mR(a)(f)(w)\psi(ax)\psi(x^{-1})\chi_K(-x)|x|d^\times x da.$$
We can change the order of integration over compact sets. This gives
$$\gamma(\chi_K,\psi)\int\limits_{\mK_1}
\left(\int\limits_{\mK_1}
\mR(a)(f)(w)\psi(ax)da\right)\psi(x^{-1})\chi_K(-x)|x|d^\times x da=$$
$$\gamma(\chi_K,\psi)\int\limits_{F^\times }\theta(t(x))\hat \mR(f)(w)
\psi(x^{-1})\chi_K(-x)|x|^{-1}d^\times x da=\Phi(f)(w),$$
as required.
\end{proof}

  \begin{Prop}\label{su3:equivariance}
    One has $\Phi\circ \theta(t^s)(f)=\theta(t)\circ \Phi(f)$
    for all $f\in \mS_0(X)$ and $t\in T_1$.
  \end{Prop}

\begin{proof}
  This is a straightforward computation and is very similar to
  the proof of the equivariance property of the classical Fourier transform. 
  $$\theta(t)\Phi(f)(w)=
 \delta^{1/2}_B(t) \int\limits_X  f(v) \mL(\la v, tw\ra) \omega_X(v).$$
One has  $\la v,tw\ra=\la (t^s)^{-1}v, w\ra$ for all $t\in T_1.$
Applying the change of variables $v\mapsto (t^s)^{-1}v$ and taking
the measure into account, we get
that the integral equals 

 $$ \delta^{1/2}_B(t^s)\int\limits_X
 f(t_s v) \mL(\la v, w\ra) \omega_X(v)=
 \Phi(\theta(t^s)f)(w)$$
 as required. 
\end{proof}

\subsection{The Whittaker map}\label{subsection:su3:whittaker}
It remains to compute $\kappa_\Psi(\Phi)$. 

We fix a character $\Psi$ of $U^{op}_1$
such that ${\Psi(x(r,r'))=\psi(\T(r))}$. 
The Whittaker map $\mW_\Psi:\mS_c(X)\rightarrow \mS_c(T_1)$
is defined as in introduction. 

\begin{Prop}\label{SU3:kappa} Let $f\in \mS_0(X)$. 
\begin{enumerate}
\item $\mW_\Psi(\Phi(f))(1)=\mW_\Psi(f)(t(-1)),$
\item
  $\mW_\Psi(\Phi(f))(t)= \mW_\Psi(f)(t(-1)t^s).$
\end{enumerate}
  \end{Prop}

The proof occupies the rest of this subsection. We start with
the following technical Lemmas, whose proofs are postponed
to the end of this subsection.  

\begin{Lem}\label{int:y}
 For any $x\in F$  and  $g\in \mS_c(K)$ one has   
$$\int\limits_{\sqrt{\tau}F} \int\limits_{K}  g(b)
  \psi(-x\T(b\cdot y)) db\, dy=
  |x|^{-1} \int\limits_F g(b) db.$$
\end{Lem}

According to Weil, \cite{Weil}
there exists a constant $\gamma(\chi_K, \psi),$ which
is  a fourth root of unity, satisfying
\begin{equation}\label{weil:K}
\int\limits_K \mF_{\psi,K}(f)(x) \psi(\N(x))dx=
\gamma(\chi_K,\psi)\int\limits_K f(x)\psi(-\N(x)) dx.
\end{equation}
For all $t\in F^\times$ denote by $\psi_t$ the additive character
$\psi_t(x)=\psi(tx)$. One has 
\begin{itemize}
  \item
$\gamma(\chi_K,\psi_t)=\chi_K(t)\gamma(\chi_K,\psi),$
\item
$\gamma(\chi_K, \psi)=1$ if $K$ is split. 
\end{itemize}

\begin{Lem} \label{int:r}
  For any $g\in \mS_c(F)$ one has 
\begin{equation}\label{eq:int:r}
  \int\limits_K \int\limits_{F^\times}
  g(x)\psi\left(-\frac{\N(r)}{x}\right)\chi_K(x)d^{\times}x
  \psi(\T(r)) dr=\gamma(\chi_K,\psi)\chi_K(-1)
  \mF_\psi(g)(1).
  \end{equation}
\end{Lem}

\begin{proof}[Proof of Proposition \ref{SU3:kappa}] 
  It is easy to see that the first part implies the second.
  Indeed, assuming part $(1)$, for any $t\in T_1$ and $f\in\mS_0(X)$ one has
  $$\mW_\Psi(\Phi)(f)(t)=\mW_\Psi(\theta(t)\Phi(f))(1)=
 \mW_\Psi(\Phi(\theta(t^s)f))(1)=$$
  $$\mW_\Psi(\theta(t^s)f)(t(-1))=\mW_\Psi(f)(t(-1)t^s).$$

Using Bruhat decomposition for $G_1$ this equals.
$$\mW_\Psi(\Phi(f))(1)=
 \int\limits_{U_1^{op}}
  \int\limits_{T_1}
  \int\limits_{U_1^{op}}
  f([t^{-1}u_1])\mL(\la[t^{-1}u_1],[u_2]\ra)\Psi(u_2)^{-1}
  \delta_B(t)\,du_1\, dt \,du_2=$$
  $$\int\limits_{U_1^{op}}
  \int\limits_{T_1}
  \int\limits_{U_1^{op}}
  \theta(t)f([u_2])\Psi(u_2)^{-1} du_2
  \mL(\la[t^{-1}u_1],[1]\ra)\Psi(u_1)
  \delta^{1/2}_B(t) dt\,du_1=$$
  $$\int\limits_{U_1^{op}}
  \int\limits_{T_1} \mW_\Psi(f)(t)
  \mL(\la[t^{-1}u_1],[1]\ra)\Psi(u_1)
  \delta^{1/2}_B(t)dt\,du_1.$$
  We put
  $$t=t(b), b\in K^\times,
  \quad u_1=x(r, y), r\in K, y\in \sqrt\tau F.$$
To ease  notation we write $\bar f\in\,S_c(F^\times)$ for
  the function $b\mapsto \mW_\Psi(f)(t(b)).$

  One has 
$$ \la  [t^{-1}(b)x(r,y)],[1] \ra=
  -\T(b) \N(r)/2- \T(b y).$$
Hence the above equals  
  $$\int\limits_{K}\int\limits_{\sqrt\tau F}
  \int\limits_{K^\times} \bar f(b)
  \mL(-\T(b)\frac{\N(r)}{2}-\T(b\sqrt{\tau}y))\psi(\T(r))
  |\N(b)| d^\times b dr ty.$$
 
  Writing explicitly the expression for $\mL$ from \ref{def:L}
  and rearranging the change of integrals this equals
%
\begin{multline}
\gamma(\chi_K,\psi)\chi_K(-1)\cdot 
\int\limits_{K}\int\limits_{F^\times} \\
\int\limits_{\sqrt \tau F} \int\limits_{K^\times} 
\left(\bar f(b) \psi(-\T(b x \N(r)/2)) \right)
\psi(-\T(bx y )) d b dy\\
\chi_K(x)\psi(x^{-1})|x| d^\times  x  \,\, \psi(\T(r)) dr.
\end{multline}

We apply  Lemma \ref{int:y} to the middle line, i.e. for 
$$g(b)=\bar f(b)\psi(-\T(b x \N(r)/2)).$$
Notice that for $b\in F$ one has  $\T(b x\N(r)/2)=bx\N(r).$
Hence the middle line equals
$$|x|^{-1}\int\limits_F \bar f(b)\psi(-bx\N(r)) db $$
The integral becomes  $\gamma(\chi_K,\psi)\chi_K(-1)$ times 

$$
  \int\limits_{r\in K} \int\limits_{b\in F} \bar f(b)
  \int\limits_{x\in F^\times} \psi(-bx \N(r)) \chi_K(x)\psi(x^{-1}) d^\times x db \,\, \psi(\T(r)) dr
  $$

  After the change of variables $bx\mapsto x^{-1}$ this becomes
 $\gamma(\chi_K,\psi)\chi_K(-1)$ times 
$$ \int\limits_{r\in K}
\int\limits_{x\in F^\times}
\left(\int\limits_{b\in F} \bar f(b)\chi_K(b) \psi(xb) db\right)
\psi(-\frac{\N(r)}{x})
\chi_K(x) d^\times x  \,\, \psi(\T(r)) dr=
$$

$$\gamma(\chi_K,\psi)\chi_K(-1)\int\limits_{r\in K}
\int\limits_{x\in F^\times} \mF_\psi(\bar f\chi_K)(x)
\psi(-\frac{\N(r)}{x})
\chi_K(x) d^\times x  \,\, \psi(\T(r)) dr.
$$

Applying Lemma
\ref{int:r} to $g=\bar f \chi_K$  and the properties of
$\gamma(\chi_K,\psi)$ we obtain that $\mW_\Psi(\Phi(f))(1)$ equals
$$(\gamma(\chi_K,\psi)\chi_K(-1))^2
\mF_\psi(\mF_\psi(\bar f\chi_K))(1)=\bar f(-1)=\mW_\Psi(f)(t(-1)),$$
as required.
\end{proof}

It remains to prove Lemmas.
\begin{proof}[Proof of Lemma \ref{int:y}]
 We fix the isomorphism of vector spaces
  $$K\simeq F\oplus F, \quad b_1+\sqrt{\tau}b_2\mapsto (b_1,b_2)$$
which induces the isomorphism $S_c(K)\simeq S_c(F)\otimes S_c(F)$.
The self-dual measure on $K$ with respect to $(\psi, \N)$
is transported under this isomorphism to $|2| |\tau|^{1/2} db_1 db_2.$

It is enough to prove Lemma for $g=g_1\otimes g_2,$
where $g_1, g_2\in \mS_c(F),$ so that $g(b_1+\sqrt{\tau}b_2)=g_1(b_1)g_2(b_2).$

Let us write $y=\sqrt{\tau}y'$ for $y'\in F$ and $dy=|\tau|^{1/2} dy'.$
Then for
$b=b_1+\sqrt{\tau} b_2$ one has $\T(bx\sqrt{\tau}y')=2\tau b_2 xy'.$

$$\int\limits_F \int\limits_K g(b) \psi(\T(bx\sqrt{\tau}y) db dy=$$

 $$\int\limits_{F^3} g(b_1,b_2)\psi(2\tau b_2xy)  |2\tau| db_1 db_2 dy'=
 |2\tau|
 \int\limits_F g_1(b_1)db_1 \int\limits_F \mF_{\psi}(g_2)(2\tau xy)dy=$$
 $$g_2(0)|x|^{-1}\int\limits_F g_1(b_1)db_1=|x|^{-1}\int\limits_F g(b)db.$$

\end{proof}

\begin{proof}[Proof of Lemma \ref{int:r}]
Let $c\in F^\times\backslash \N(K^\times)$, so that
$F^\times=\N(K^\times)\cup c\N(K^\times)$.
The measures $d^\times y$ on $K^\times$
and $d^\times x$ on $\N(K^\times)\subset F^\times$ define 
Haar measures  on the fibers of $\N:K^\times \mapsto F^\times$.
All the fibers are compact and have the same measure $C$.
By the Fubini theorem for any function $h\in L^1(F^\times)$ one has  
\begin{equation}\label{int:F:K}
 \int\limits_{F^\times} h(x) d^\times x=
C^{-1} \int\limits_{K^\times} h(\N(y))+h(c\N(y)) d^\times y.
\end{equation}

Applying this
  integral over $F^\times$ in the LHS of \ref{eq:int:r}
  we obtain 
  $$C^{-1}\int\limits_K \int\limits_{K^\times}
  g(\N(y))\psi(-\N(r/y))- g(c\N(y))\psi(-c^{-1}\N(r/y))d^{\times}x
  \psi(\T(r)) dr. $$
After the change of variables $r\mapsto r\bar y$ this equals
  \begin{multline*}
    C^{-1}\times 
    \int\limits_K
    \left(
    \int\limits_{K} g(\N(y))\psi(\T (r \bar y))\,dy\right) \psi(-\N(r))dr -\\
    \int\limits_K
    \left(\int\limits_{K} g(c\N(y))\psi(\T (r \bar y))\, dy\right)
    \psi(-c^{-1}N(r)) dr=
  \end{multline*}
  \begin{multline*}
    C^{-1}\times 
    \int\limits_K \mF_{K,\psi}(g\circ \N)(r)  \psi(-\N(r))dr-
    \int\limits_K \mF_{K,\psi}(g_c\circ \N)(r) 
    \int\limits_{K} \psi(-c^{-1}\N(r)) dr
  \end{multline*}
where $g_c(x)=g(cx)$ for any $x$. This equals by \ref{weil:K}
  \begin{multline*}
    C^{-1}\times \chi_K(-1)\gamma(\chi_K,\psi)
    \int\limits_K (g\circ \N)  \psi(\N(r))dr+
    |c|\int\limits_K (g_c\circ \N)(r) \psi(c\N(r)) dr.
  \end{multline*}
  Applying  equation \ref{int:F:K} again this equals
  $$\chi_K(-1)\gamma(\chi_K,\psi)\int\limits g(x)\psi(x) dx= 
  \gamma(\chi_K,\psi)\chi_K(-1)\mF_\psi(g)(1).$$

\end{proof}

The restriction $\mW_\Psi:\mS_0(X)\rightarrow \mS_c(T)$,
whose image we denote by $\mS_0(T),$
gives rise to the homomorphism
$\kappa_\Psi:\End_{G}(\mS_0(X))\rightarrow \End_{\C}(\mS_0(T))$.
\begin{Lem} $\kappa_\Psi$ is injective.
\end{Lem}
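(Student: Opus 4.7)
The plan is to assume $B \in \End_G(\mS_0(X))$ with $\kappa_\Psi(B) = 0$ and deduce $B = 0$. The hypothesis amounts to $\mW_\Psi(B(f)) \equiv 0$ on $T$ for every $f \in \mS_0(X)$. First I would exploit the $G$-equivariance of $B$ together with the $G$-stability of $\mS_0(X)$: for an arbitrary $g \in G$, substituting $\theta(g,1)f \in \mS_0(X)$ for $f$ and evaluating the Whittaker transform at $t = e$ yields
\[
0 = \mW_\Psi\bigl(\theta(g,1) B(f)\bigr)(e) = \int_U B(f)(U n_0 u g)\, \Psi^{-1}(u)\, du =: W(B(f))(g).
\]
Hence the $\Psi$-Whittaker function $W(B(f))$ vanishes identically on $G$, for every $f \in \mS_0(X)$.

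The proof therefore reduces to the injectivity of the map $W \colon \mS_0(X) \to \{\Psi\text{-Whittaker functions on } G\}$. I would argue via the Bruhat decomposition $G = B \sqcup B n_0 U$: under the identification $X \leftrightarrow C_0$, the basic affine space decomposes into a closed small cell $T \cdot v_0$ (corresponding to vectors with first coordinate zero) and an open dense big cell parametrized by $T \times U$ through $(t, u) \mapsto U t^{-1} n_0 u$. On the big cell, after a Bruhat reduction of $n_0 u g$, the integral $W(h)(g)$ becomes a $\Psi^{-1}$-Fourier coefficient of $h$ in the $U$-direction, twisted by a character depending on $g$. Varying $g$ over $G$ sweeps through enough twists that Fourier inversion on the Heisenberg-type group $U$ (with $\Psi$ nontrivial modulo the center) recovers $h|_{\text{big cell}}$ from the values $W(\theta(g,1)h)(e)$; hence $W(h) \equiv 0$ forces $h$ to vanish on the open dense cell. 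The Mellin vanishing conditions $P_\gamma(h) = 0$ from \ref{def:S0:SU3} then eliminate the residual small-cell contribution.

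The delicate point—and the main obstacle—is precisely the last step: the Whittaker map is a priori only known to give an \emph{isomorphism} $\mS_0(X)_{U,\Psi} \xrightarrow{\sim} \mS_0(T)$ on $(U,\Psi)$-coinvariants, and one must upgrade this to injectivity of the full operator $W$ on $\mS_0(X)$. This requires verifying that the Mellin kernel conditions, chosen precisely to kill the non-generic asymptotic behaviors of $f$ near the small Bruhat cell, combine with the $U$-Fourier analysis on the big cell to rule out any nonzero $h \in \mS_0(X)$ whose Whittaker function is identically zero. Once this is established, the family of functionals $\{h \mapsto \lambda(\theta(g,1)h): g \in G\}$, where $\lambda = \mW_\Psi(\cdot)(e)$, separates points of $\mS_0(X)$, and the vanishing $\lambda(\theta(g,1)B(f)) = 0$ for all $g, f$ forces $B = 0$.
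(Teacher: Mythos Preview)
Your reduction is correct up to the point where you must show that no nonzero $G$-submodule of $\mS_0(X)$ lies in $\Ker\mW_\Psi$; equivalently, that $W(h)\equiv 0$ on $G$ forces $h=0$ for $h\in\mS_0(X)$. The gap is in the proposed proof of this step. Varying $g\in G$, the integrals $W(h)(g)$ only produce $U$-Fourier coefficients of (translates of) $h$ against characters in the $T$-orbit of $\Psi$, i.e.\ the \emph{non-degenerate} characters of $U$. For $SU_3$ the group $U$ is Heisenberg and $\Psi$ is trivial on its center, so you are very far from having all characters (or all irreducibles) of $U$ at your disposal; genuine Fourier inversion on $U$ is not available, and one cannot conclude that $h$ vanishes on the big Bruhat cell from $W(h)\equiv 0$. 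The sentence ``the Mellin vanishing conditions then eliminate the residual small-cell contribution'' does not repair this: the obstruction is not localized on the small cell, it is the possible presence of a non-generic $G$-constituent inside $\mS_0(X)$, whose Whittaker function would vanish identically while the constituent is nonzero.

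The paper's argument supplies exactly the missing representation-theoretic input. One takes a hypothetical nonzero $G$-submodule $V\subset\Ker\mW_\Psi$ and applies the $T$-coinvariant (Mellin) functor at a character $\chi$ chosen in the Zariski-open locus where (i) some $f\in V$ has nonzero image, and (ii) $\mS_0(X)_{T,\chi}\simeq I^G_B(\chi)$ is an \emph{irreducible} principal series. The induced map $V_{T,\chi}\to I^G_B(\chi)$ is then nonzero, hence surjective. Now apply the exact $(U,\Psi)$-coinvariant functor: on the left $(V_{U,\Psi})_{T,\chi}=0$ since $V\subset\Ker\mW_\Psi$, while on the right $I^G_B(\chi)_{U,\Psi}\neq 0$ because principal series are generic. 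This contradiction is what replaces your Fourier-inversion step; the essential fact used is the existence of Whittaker models for generic principal series, not harmonic analysis on $U$.
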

\begin{proof} See the proof of \ref{injective:general}
  for the general case.
  \end{proof}

Let us define the action of $W$ on $\mS_0(T_1)$.
\begin{Def}\label{SU3:W:action}
The action of $W$ on $\mS_0(T_1)$ is defined by 
$$s\cdot \varphi(t)= \varphi(t(-1)t^s)$$
\end{Def}

\begin{Thm}\label{main:su3}
There exists a unique  unitary  involution 
$\Phi_s\in \Aut(L^2(X,\omega_X))$  that preserves the space $\mS_0(X)$ 
and satisfies
\begin{equation}\label{request:su3}
  \left\{
 \begin{array}{ll} 
   \theta(g,t)\circ \Phi_s=\Phi_s\circ \theta(g,t^s)& g\in G_1, t\in T_1\\
   \kappa_\Psi(\Phi_s)(\varphi)=s\cdot \varphi  & \varphi\in \mS_0(T_1)  
\end{array}\right.
\end{equation}
\end{Thm}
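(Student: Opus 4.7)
Proof plan. Uniqueness is immediate from the injectivity of $\kappa_\Psi$: if $\Phi_s$ and $\Phi'_s$ both satisfy \ref{request:su3}, then $\Phi_s\circ (\Phi'_s)^{-1}$ lies in $\End_G(\mS_0(X))$ and is sent by $\kappa_\Psi$ to the identity, whence $\Phi_s = \Phi'_s$.

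For existence the plan is to take $\Phi_s := \Pi(r)$. Unitarity and $\Phi_s^2=\mathrm{Id}$ follow from the unitarity of the minimal representation and $r^2=1$; preservation of $\mS_0(X)$ is Lemma~\ref{SO:by:Phi:SU3}. The $G$-equivariance $\theta(g)\Phi_s = \Phi_s\theta(g)$ follows because $r$ acts as the identity on $V_1 = \bW$ and therefore commutes with every element of $O(V_1)\supset G$; the $\theta$-action of $G$ on $L^2(X) = L^2(C_0)$ coincides with the restriction of $\Pi$ to $G$.

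The key observation is that once $G$-equivariance and the Whittaker equation $\kappa_\Psi(\Phi_s) = s\cdot$ are established, the $T$-equivariance follows automatically. Indeed, for any $t\in T$ the operator $A_t := \theta(t)\Phi_s - \Phi_s\theta(t^s)$ lies in $\End_G(\mS_0(X))$, and a direct computation using the identity $\mW_\Psi(\theta(t)f)(t') = \delta_B^{1/2}(t)\mW_\Psi(f)(t't)$ together with the explicit formula $s\cdot\varphi(t(a)) = |\N(a)|^{-2}\varphi(t(-\bar a^{-1}))$ shows that $\kappa_\Psi(A_t)=0$ (both sides equal $|\N(a)|^{-2}|\N(c)|^{-1}\varphi(t(-\bar a^{-1}\bar c^{-1}))$ when $t=t(c)$ and one evaluates at $t'=t(a)$); injectivity of $\kappa_\Psi$ then forces $A_t=0$, hence the $T$-equivariance on $\mS_0(X)$ and, by unitarity and density, on $L^2(X)$.

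It therefore remains to verify $\kappa_\Psi(\Phi_s)=s\cdot$, i.e., that $\mW_\Psi(\Phi_s f)(t(a)) = |\N(a)|^{-2}\mW_\Psi(f)(t(-\bar a^{-1}))$ for $f\in \mS_0(X)$. On the dense subspace $\mS_c(X)$ I would use the explicit integral formula $\Phi_s = \Phi_1+\Phi_2$ from \ref{Fourier:decomp} (specialized to $n=3$). The term $\Phi_2(f)$ contributes only a constant germ at zero proportional to $I_C(f)$, which vanishes on $\mS_0(X)\subset \ker P_{2\alpha}$. The main calculation is to unfold $\mW_\Psi\circ\Phi_1$: express $\Phi_1(f)$ as the Mellin integral of the normalized Radon transform $\hat\mR(f)$ with weight $\gamma(\chi_K,\psi)\chi_K(-x)|x|^{n-2}$, exchange the Whittaker integration over $U$ with the $x$-integral, and change variable $v\mapsto \bar a v$ on the cone. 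The main obstacle is the bookkeeping of Weil indices, the quadratic character $\chi_K$, and the Galois involution $a\mapsto\bar a$ so that all factors combine to produce exactly $|\N(a)|^{-2}\varphi(t(-\bar a^{-1}))$.
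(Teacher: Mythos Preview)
Your uniqueness argument and the choice $\Phi_s=\Pi(r)$ are correct and match the paper. Your idea of deducing $T$-equivariance \emph{after} establishing $\kappa_\Psi(\Phi_s)=s\cdot$ (via injectivity of $\kappa_\Psi$ applied to $A_t=\theta(t)\Phi_s-\Phi_s\theta(t^s)\in\End_G(\mS_0(X))$) is a nice shortcut that the paper does not take; the paper instead proves $T$-equivariance directly for a deformation $\Phi_s^z$ with $Re(z)>2$ and then specializes.

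However, your plan for computing $\kappa_\Psi(\Phi_s)$ has a genuine gap: the roles of $\Phi_1$ and $\Phi_2$ are reversed. The operator $\Phi_1$ factors as (up to a constant) $\hat\mR\circ p_{|\cdot|}$, so $\Ker\Phi_1\supset\Ker P_\alpha\supset\mS_0(X)$; thus $\Phi_1(f)=0$ for every $f\in\mS_0(X)$, and your ``main calculation'' $\mW_\Psi\circ\Phi_1$ yields identically zero. What survives on $\mS_0(X)$ is $\Phi_s(f)=\Phi_2(f)$, and the fact that $[\Phi_2(f)]_0$ is a constant (vanishing on $\Ker P_{2\alpha}$) says nothing about $\mW_\Psi(\Phi_2(f))$, which is an integral over $U$ and depends on $\Phi_2(f)$ globally, not on its germ at $0$.

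Even after correcting this, a direct unfolding of $\mW_\Psi\circ\Phi_2$ (or of $\mW_\Psi\circ\Phi$) runs into the difficulty the paper flags explicitly: for $SU_3$ the operator $\Phi_s$ is \emph{not} an integral operator with continuous kernel, so the iterated integral you would need to interchange does not converge absolutely. The paper's remedy is to introduce the holomorphic family
\[
\Phi_s^z(f)(w)=\gamma(\chi_K,\psi)\int_{F}^{\ast}\hat\mR(f)(xw)\,\psi(x^{-1})\chi_K(-x)|x|^{2-z}\,d^\times x,
\]
which for $Re(z)>2$ \emph{is} a kernel operator with continuous kernel $\mL_z(\langle v,w\rangle)$. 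In that range one computes $\mW_\Psi(\Phi_s^z(f))(e)$ explicitly (obtaining a double Fourier transform in the Whittaker variable), then lets $z\to 1$. Without this analytic continuation, or an equivalent regularization, your Whittaker computation does not go through.
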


\begin{proof}
  The injectivity of $\kappa_\Psi$ implies uniqueness of such operator.
  and hence it is enough to construct such $\Phi_s$.
  We put $\Phi_s=\Phi$. The properties  follow from
  Theorem \ref{Phi:involution},
  Propositions \ref{su3:equivariance},
  \ref{su3:s0:preserved} and \ref{SU3:kappa}, part $(2)$.
\end{proof}

\section{Quasi-split groups}\label{sec:qs:groups}

 We recall  below the structure of
 reductive quasi-split groups. Our main reference is \cite{BruhatTits}.

\subsection{Relative and absolute root systems} 
Let ${\bf G}$ be a reductive,  connected, simply-connected
quasi-split group over $F$ with a maximal split torus ${\bf T'}$.
We denote by $Lie(G)$ the Lie algebra of ${\bf G}$ an by $Ad$ the adjoint
action of ${\bf G}$ on $Lie(G)$.
Let  ${\bf T}$ be the centralizer  of ${\bf T'}$ and ${\bf N}$ be the normalizer of ${\bf T'}$,
both defined over $F$.

The root datum of ${\bf G}$ with respect to ${\bf T'}$
is a quadruple $(X^\ast({\bf T'}), R, X_\ast({\bf T'}), R^\vee)),$
where the set of roots $R\subset X^\ast({\bf T'})$ consists of the
weights that appear in the representation
$Ad:{\bf T'}\rightarrow Aut(Lie(G))$.

The root system $R$ is not necessarily reduced. 
For any root $\alpha\in R$, its root ray is defined as
$1\otimes R\cap \bR_{>0}\otimes \alpha,$
in $\bR\otimes X^\ast({\bf T'})$. Each root ray contains one or two elements.
 We denote by ${\bf R}$ the set of root rays.

The choice of a Borel subgroup ${\bf B}$,
containing ${\bf T}$ and defined over $F$ determines
the decomposition $R=R^+\cup R^-$ into the set of positive and negative roots
and the subset $\Delta\subset R^+$ of simple roots.
We call a root ray positive (resp. negative, resp. simple) if it contains
a positive (resp. negative, resp. simple) root.

The groups  ${\bf G}$ and ${\bf T}$ are split over  the separable closure
$F_s$ of $F$.  There exists a minimal extension $F\subset E\subset F_s$
over which ${\bf T}$ and hence ${\bf G}$ splits. Then $E/F$ is Galois.
We denote this split $E$-group by $\tilde{\bf G}$. It  has a root datum
$(X^\ast({\bf T}), \ti R, X_\ast({\bf T}), \ti R^\vee).$
Note that all root rays in $X^\ast({\bf T})\otimes \bR_{>0}$ are singletons. 

The Borel subgroup  $\ti {\bf B}$ containing ${\bf B}$
of $\ti {\bf G},$ determines the set
$\ti R^+$ of positive roots and the set $\ti \Delta$ of  simple roots.
The Galois group  $\Gamma=Gal(E/F)$ acts on
$X^\ast({\bf T}), \ti R, \ti R^+$ and  $\ti \Delta$.

There is a bijection $\beta\leftrightarrow \ti R_\beta$ between
the set $R$ of roots  and the set of $\Gamma$ orbits of $\ti{R}$.
The restriction of every root in $\ti R_\beta$ to $T'$ equals to $\beta$.

\begin{Def} Let  $\alpha\in \ti R$. The field
  $L_\alpha=E^{\Gamma_\alpha}$ is called the field of
  definition of $\alpha$, where  $\Gamma_\alpha\in \Gamma$ is the stabilizer
  of $\alpha$.
\end{Def}

  \begin{Prop}
    \begin{enumerate}
\item For any $\gamma\in \Gamma$ and $\alpha\in \ti{R}$
  one  has $L_{\gamma(\alpha)}=\gamma(L_\alpha).$
\item  For  $\alpha\in \ti R,$ if $\alpha|_{T'}$ is  a divisible
        root in $R$,  then there exist roots $\alpha_1,\alpha_2\in \ti R$
        such that
        $$\alpha_1|_{T'}=\alpha_2|_{T'}=\alpha/2|_{T'},
        \quad \alpha=\alpha_1+\alpha_2.$$  In addition
        $L_{\alpha_1}=L_{\alpha_2}$ is a quadratic extension of $L_{\alpha}$. 
    \end{enumerate}
    \end{Prop}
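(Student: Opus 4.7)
For part (1), my plan is a direct Galois-theoretic calculation. I would compute $\Gamma_{\gamma(\alpha)}$ from its definition: $\sigma\in\Gamma_{\gamma(\alpha)}$ iff $\sigma\gamma(\alpha)=\gamma(\alpha)$ iff $\gamma^{-1}\sigma\gamma\in\Gamma_\alpha$, hence $\Gamma_{\gamma(\alpha)}=\gamma\Gamma_\alpha\gamma^{-1}$. The identity $L_{\gamma(\alpha)}=\gamma(L_\alpha)$ then follows from the elementary fact that $E^{\gamma H\gamma^{-1}}=\gamma(E^H)$ for any subgroup $H\leq\Gamma$, which reduces to the observation that $x$ is fixed by $\gamma H\gamma^{-1}$ if and only if $\gamma^{-1}(x)$ is fixed by $H$. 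Applying this with $H=\Gamma_\alpha$ is the entire proof.

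For part (2), my plan is to invoke the classification of quasi-split simple simply-connected groups: a divisible relative root can occur only when a simple factor of $\mathbf{G}$ has relative root system of type $BC_m$, which in the quasi-split case forces the absolute type of that factor to be $A_{2m}$ with the non-trivial Galois involution of the Dynkin diagram acting on it (i.e.\ a factor of type ${}^2\!A_{2m}$). After reducing to this factor, the divisible relative roots are precisely the restrictions of the Galois-fixed absolute roots, so in particular $\alpha$ itself is fixed by the relevant involution. Concretely, I would pick any $\alpha_1\in\ti R_{\alpha|_S/2}$ (which is non-empty since $\alpha|_S/2\in R$) and set $\alpha_2:=\alpha-\alpha_1$; the verification that $\alpha_2\in\ti R_{\alpha|_S/2}$ then comes down to a direct check in the $A_{2m}$ root system, where every Galois-fixed positive root decomposes uniquely as a sum of two Galois-conjugate absolute roots, both restricting to the same relative root.

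For the quadratic-extension claim, the stabilizer $\Gamma_\alpha$ acts on the two-element set $\{\alpha_1,\alpha_2\}$, since any $\sigma\in\Gamma_\alpha$ must permute the summands of the fixed sum $\alpha=\alpha_1+\alpha_2$. This action is non-trivial: by the construction above some element of $\Gamma$ swaps $\alpha_1$ and $\alpha_2$, and that element lies in $\Gamma_\alpha$ because it fixes $\alpha$. Therefore $\Gamma_{\alpha_1}=\Gamma_{\alpha_2}$ is the kernel of the swap action, an index-$2$ subgroup of $\Gamma_\alpha$, so $L_{\alpha_1}=L_{\alpha_2}=E^{\Gamma_{\alpha_1}}$ is a quadratic extension of $L_\alpha=E^{\Gamma_\alpha}$, and part (1) then guarantees that $L_{\alpha_1}$ and $L_{\alpha_2}$ coincide (since $\Gamma_{\alpha_1}$ is normal in $\Gamma_\alpha$ of index $2$).

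The main obstacle is the existence assertion in (2): without appealing to the classification, one would need to argue directly that $\alpha-\alpha_1$ is a root whenever $\alpha|_S$ is divisible and $\alpha_1|_S=\alpha|_S/2$. This can be done by a Cartan-integer computation $\langle\alpha,\alpha_1^\vee\rangle\in\{1,2\}$ using the structure of the restricted root system, but it is cleanest (and, given the intended audience, sufficient) to quote the $A_{2m}$ structure directly.
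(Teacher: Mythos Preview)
The paper does not supply a proof of this proposition: it is stated as a structural fact about quasi-split groups, with the reference to Bruhat--Tits \cite{BruhatTits} standing in for the argument. So there is no ``paper's own proof'' to compare against; your proposal is a reasonable reconstruction of why the statement holds.

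Your argument for (1) is correct and is exactly the standard one.

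For (2), your overall strategy is sound, but one step can be sharpened and another slightly simplified. First, the assertion that $\Gamma_\alpha$ acts on the two-element set $\{\alpha_1,\alpha_2\}$ is not automatic from $\sigma(\alpha_1)+\sigma(\alpha_2)=\alpha$ alone, since a priori there could be several decompositions of $\alpha$ as a sum of two roots restricting to $\alpha|_S/2$. The clean way around this is to note (as you essentially do via the ${}^2\!A_{2m}$ reduction) that for any $\alpha_1$ with $\alpha_1|_S=\tfrac{1}{2}\alpha|_S$ one has $\alpha_1+\sigma(\alpha_1)$ restricting to $\alpha|_S$ for the relevant involution $\sigma$, and in fact $\alpha_2=\sigma(\alpha_1)$; this simultaneously gives existence of the decomposition, the swap, and the equality $\Gamma_{\alpha_1}=\Gamma_{\alpha_2}$ without needing uniqueness of the pair. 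Second, once you know $\alpha_2=\sigma(\alpha_1)$ for some $\sigma\in\Gamma_\alpha$, the index-$2$ statement is immediate and you do not need to invoke part (1) or normality separately: $\Gamma_{\alpha_1}$ is the stabilizer of a point in a transitive $\Gamma_\alpha$-action on a two-element set.
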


\subsection{The Chevalley-Steinberg pinning}
For any $a\in {\bf R}$ there exists a maximal
connected subgroup ${\bf U}_a$ of ${\bf G}$,  defined over $F$, such that
the weights that appear in the representation
$Ad:{\bf T'}\rightarrow \Aut(Lie({\bf U}_a))$ belong to $a$. 
The group ${\bf U}_a$ is called the root subgroup corresponding to 
$a\in {\bf R}$.


For any   simple root ray $a$ in ${\bf R},$
let ${\bf G}_a$ be the group generated by ${\bf U}_a$ and ${\bf U}_{-a}$. Since
the group ${\bf G}$ is simply-connected,
the group ${\bf G}_a$ is a simply connected  group of rank $1$ over $F$.
We denote by ${\bf T}_a$ and ${\bf T}_a'$ the maximal torus and the maximal
split torus of ${\bf G_a}$ respectively.
The group
$\ti{\bf G}_a$ in $\ti{\bf  G}$ is ${\bf G}_a$
considered as a group over $E$. 

The following proposition describes $G_a$ and $\ti G_a$.

\begin{Prop} Let $a$ be a root ray. There are two possible cases
\begin{itemize}
\item $a=\{\alpha\}$. In this case the group
  $\ti{\bf  G}_a$ is isomorphic over $E$ to a
  product of copies of the group $SL_2$, indexed by $\ti R_\alpha$.

  There exists  an isomorphism
  $\phi_a:SL_2(L_\alpha) \rightarrow G_a$
  such that
  $$\phi_a(x(r))\in U_{-a},\quad \phi_a(^tx(r))\in U_{a}, \quad\phi_a(n_s)\in N$$
 
\item $a=\{\alpha, 2\alpha\}$. In this case 
  the group $\ti{\bf  G}_a$ is isomorphic to a product of copies
  of $SL_3$ indexed by
  the set $I$ of subsets $\{\alpha_1,\alpha_2\}\subset \ti R_\alpha,$
  such that
  $\alpha_1+\alpha_2\in \ti R$. The field  $L_{\alpha_1}=L_{\alpha_2}$
  is a quadratic extension of $L{\alpha_1+\alpha_2}$
  with a non-trivial automorphism $x\mapsto \bar x$.  
  Let  $SU_3$ be the group of automorphisms
  on the Hermitian space $L_{\alpha_1}^3$ preserving the form
  $h(x,y,z)=\T(\bar x z)+\N(\bar y y)$ and having
  determinant $1$. It is a quasi-split group of rank $1$
  over $L_{\alpha_1+\alpha_2}$.

  There exists an isomorphism
  $\phi_a:SU_3(L_{\alpha_1+\alpha_2}) \rightarrow G_a$
  such that
  $$\phi_a(x(r,r'))\in U_{-a},\quad\phi_a({^tx(r,r')})\in U_{a},
  \quad\phi_a(n_s)\in N.$$
    
\end{itemize}
\end{Prop}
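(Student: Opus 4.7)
The plan is to obtain both cases uniformly by Galois descent from the split form $\ti{\bf G}$ to ${\bf G}$, applied to the rank-one subgroup $\ti{\bf G_a}$. First I would decompose $\ti{\bf G_a}$ as a product of rank-one groups over $E$, then identify the $\Gamma=\mathrm{Gal}(E/F)$-action on that decomposition, and finally take fixed points to recover ${\bf G_a}$ together with an explicit parameterization.

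Over $E$ the group $\ti{\bf G_a}$ is generated by the root subgroups $\ti{\bf U}_\beta$ for $\beta$ in $\pm\ti R_\alpha$ (and, in the double case, also in $\pm\ti R_{2\alpha}$). In the singleton case $a=\{\alpha\}$, for distinct $\beta_1,\beta_2\in \ti R_\alpha$ one has $\beta_1+\beta_2\notin \ti R$: otherwise its restriction to ${\bf S}$ would give $2\alpha\in R$, contradicting that $a$ is a singleton ray. The Chevalley commutation relations then imply that the copies of $SL_2$ generated by $\ti{\bf U}_{\pm\beta}$ commute pairwise, so $\ti{\bf G_a}\simeq \prod_{\beta\in\ti R_\alpha} SL_2^{(\beta)}$ over $E$. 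In the double case $a=\{\alpha,2\alpha\}$, each pair $\{\alpha_1,\alpha_2\}\in I$ spans together with $\alpha_1+\alpha_2$ a type-$A_2$ subsystem whose associated simply-connected subgroup is a copy of $SL_3$; roots taken from distinct pairs again sum outside $\ti R$ (otherwise their restriction to ${\bf S}$ would produce unwanted root rays), so $\ti{\bf G_a}\simeq\prod_{\{\alpha_1,\alpha_2\}\in I} SL_3^{(\{\alpha_1,\alpha_2\})}$ over $E$.

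Next I would analyze the Galois action. In the singleton case $\Gamma$ acts transitively on $\ti R_\alpha$ by permuting the factors, with stabilizer of the chosen $\alpha$ equal to $\Gamma_\alpha$; within the chosen factor the residual action is the standard Galois action on $SL_2$ coefficients. Hence $\ti{\bf G_a}$ with its $\Gamma$-action descends, by restriction of scalars, to $\mathrm{Res}_{L_\alpha/F}(SL_{2,L_\alpha})$, whose $F$-points are $SL_2(L_\alpha)$. In the double case, $\Gamma$ permutes the set $I$ transitively, and the stabilizer $\Gamma_{\{\alpha_1,\alpha_2\}}$ of a chosen pair sits in a short exact sequence
$$1\to \Gamma_{\alpha_1}\to \Gamma_{\{\alpha_1,\alpha_2\}}\to \Z/2\to 1,$$
the quotient corresponding to the quadratic extension $L_{\alpha_1}/L_{\alpha_1+\alpha_2}$ produced by the preceding proposition. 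On the chosen $SL_3$ factor the nontrivial element of $\Z/2$ swaps $\alpha_1\leftrightarrow \alpha_2$, which on the Chevalley pinning corresponds to the outer involution $g\mapsto J\,\bar g^{-T}\,J^{-1}$ with $J$ the antidiagonal matrix; the $\Gamma$-fixed points therefore descend to $\mathrm{Res}_{L_{\alpha_1+\alpha_2}/F}(SU_3)$, giving $G_a = SU_3(L_{\alpha_1+\alpha_2})$.

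Finally the isomorphism $\phi_a$ is built from the Chevalley/Steinberg pinning. The parameterizations of the one-parameter root subgroups $\ti{\bf U}_\beta$ assemble across factors and, after descent, match the explicit upper-triangular coordinates $x(r)$ or $x(r,r')$ of $SL_2(L_\alpha)$ or $SU_3(L_{\alpha_1+\alpha_2})$; likewise the lower-triangular $y(r)$ or $y(r,r')$ land in $U_{-a}$, and a standard choice of Weyl representative in $N_{\ti{\bf G_a}}(\ti{\bf T_a})$ descends to $\phi_a(n_s)\in N$. The main obstacle lies in the double case: one has to verify that the involution induced by $\Gamma_{\{\alpha_1,\alpha_2\}}/\Gamma_{\alpha_1}$ on the $SL_3$ factor matches \emph{exactly} the $SU_3$-defining involution attached to the Hermitian form $h(x,y,z)=\bar x z+\bar y y+\bar z x$, rather than any inequivalently twisted form. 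This amounts to comparing the action of this involution on the pinning (root vectors and Weyl representative) with the standard pinning of the quasi-split form of $SL_3$, which is a careful but routine computation with Chevalley structure constants.
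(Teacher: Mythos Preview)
Your argument is correct and is the standard Galois-descent proof of this structural fact. The paper, however, does not prove this proposition at all: it is stated as a known result from the structure theory of quasi-split groups, with a reference to Bruhat--Tits (\cite{BruhatTits}, page~78) immediately following the statement. So there is nothing to compare against; you have supplied a proof where the paper simply cites one.
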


From now on we fix a  family of isomorphisms
$\phi_a, a\in {\bf R}$ such that $\phi_a$
define a Steinberg-Chevalley pinning of the group ${\bf G}$. See
\cite{BruhatTits}, page $78$.

\subsection{The Weyl group}
The Weyl group $W$ is isomorphic to $N/T$.
For any $a\in {\bf R}$ the image of the element
$n_{s_a}=\phi_a(n_s)$ in $W$
is denoted by  $s_a.$ These elements, called simple reflections, 
generate $W$.

The roots in the same $W$ orbit have the same field of definition. 

For any $w\in \ti W$ we denote by $l(w)$ the length of a reduced
presentation of $w$ as a product of simple reflections. 

For any $w\in W$ we define  $R(w)=R^+\cap w^{-1}(R^-)$.
Then $l(w)=|R(w)|$.

We denote by $w_0$ the longest element of $W$, and by $n_0$
its representative in $N$.

\subsection{The action of $W$ on $\mS_c(T)$}
\begin{Def}\label{def:ta}
 Define for any $w\in W$ the element  and 
 $$t_w=\Pi_{a \in  {\bf R}(w)} t_a\in T,$$ where
$t_a=\phi_a(t(-1))$ for $a=\{\alpha,2\alpha\}$ and $t_a=1$ otherwise.
  \end{Def}


  \begin{Lem}\label{t, Gamma: prop}  
        $$t_{w_2}\cdot (w_2^{-1} t_{w_1}w_2)=t_{w_1w_2}$$
  \end{Lem}
  \begin{proof}
The set $R(w_1w_2)$ can be written as a disjoint union
    $$R(w_1w_2)=
    \left(R(w_2)\backslash -w_2^{-1} R(w_1)\right)\cup
    \left(w_2^{-1}R(w_1)\backslash - R(w_2)\right).$$
    Indeed,
    $$R(w_2)\backslash -w_2^{-1} R(w_1)=
    \{\alpha>0, w_2\alpha<0, w_1w_2\alpha<0\},$$
   $$w_2^{-1}R(w_1)\backslash - R(w_2)= \{\alpha>0, w_2\alpha>0, w_1w_2\alpha<0\}.$$ 
  and the union is   $R(w_1w_2)$.
Besides $R(w)=-wR(w^{-1}).$
  
Writing by definition
$$t_{w_2}=\Pi_{R(w_2)\backslash -w_2^{-1} R(w_1)} t_a\cdot \Pi_{R(w_2)\cap -w_2^{-1} R(w_1)} t_a$$
and
$$t_{w_1} =\Pi_{R(w_1)\backslash - w_2 R(w_2)} t_a\cdot \Pi_{R(w_1)\cap - w_2R(w_2)} t_a$$
we conclude that  $t_{w_2} w_2^{-1} t_{w_1} w_2=t_{w_1w_2}$. 

    \end{proof}

\begin{Prop}\label{qs:action} 
  The map $W\times \mS_c(T)\rightarrow \mS_c(T)$ defined by
\begin{equation}\label{qs:W:action}
  w\cdot \varphi(t)=\varphi(t_w\cdot w^{-1}tw)
  \end{equation}
  is an action of $W$ on $\mS_c(T)$.
\end{Prop}

  \begin{proof}
    For $w_1,w_2\in W$ one has
    $$w_1\cdot(w_2\cdot \varphi)(t)=
    (w_2\cdot \varphi)(t_{w_1} w_1^{-1}tw_1)=$$
   $$
    \varphi((t_{w_2}\cdot w_2^{-1} t_{w_1}w_2)\cdot (w_1w_2)^{-1} t(w_1w_2)),$$
    which by Lemma \ref{t, Gamma: prop} equals $w_1w_2\cdot \varphi(t)$. 
       \end{proof} 

  For groups of rank $1$ this action was defined in
  \ref{SL2:W:action} and \ref{SU3:W:action}.

  \section{Generalized Fourier transforms}\label{sec:GFT}
In this section we generalize  Theorems \ref{main:sl2}
and \ref{main:su3} that concern the quasi-split  groups of $F$-rank one
to a general quasi-split group $G$.
We keep the notation of Section \ref{sec:qs:groups}.

For any root ray  $a$ of the group $G$
we fix the isomorphisms ${\phi_a: G_1\rightarrow G_a}$, where
$G_1$ is a quasi-split group of rank $1$. 

To formulate the main result we introduce the spaces
$\mS_0(X),$ $\mS_0(T)$ and the homomorphism
$\kappa_\Psi:\End_G(\mS_0(X))\rightarrow \End_\C(\mS_0(T)).$

\subsubsection{The space $\mS_0(X)$.}
We define for each positive root ray $a$ a set of triples $\bB_a$
as in section \ref{sec:S0} as follows.

\begin{enumerate}
\item  
Assume that  $a=\{\alpha\}$ and $L_\alpha$ be the field of definition
of $\alpha$.
Then
$$\bB_a=\{(L_\alpha, a_i(x)=\phi_a(t(x)), \chi_{\pm}(x)=|x|^{\pm 1}_{L_\alpha})\}$$

\item Assume that
  $a=\{\alpha,2\alpha\}$ and $L_\alpha\supset L_{2\alpha}$ are the fields of definition
  of $\alpha$ and $2\alpha$.
  The set $\bB_a$ consists of the triples  $(L_i,a_i,\chi_i)$ where
  $$L_i=L_{2\alpha},\quad  a_i(x)=\phi_a(t(x)), \quad \chi_i\in
  \{\chi_{L_\alpha}, \chi_{L_\alpha}|\cdot|^{\pm 1},|\cdot|^{\pm 2} \}.$$
\end{enumerate}

\begin{Def}\label{def:S0:general}
  Define  $\mS_0(X)=\mS_\bB(X),$ where $\bB=\cup_{a} \bB_a$ and  the union is
  taken over all positive  root rays.
  \end{Def}
 In particular   $\mS_0(X)=\cap_a \mS_{\bB_a}(X).$
The Weyl group acts naturally on the set $\bB$, by
$w(L_\alpha,\phi_a\circ t,  \chi_i)=
(L_{w(\alpha)}=L_\alpha,\phi_{w(a)}\circ t ,\chi_i)$
where $\alpha\in a$. Note that  under this action $w(\bB_a)=\bB_{wa}.$

For groups of rank one, the definition
of the space $\mS_0(X)$ coincides with the definition given
in \ref{def:S0:SL2} and \ref{def:s0:su3}.

\subsubsection{Whittaker map  and the map $\kappa_\Psi$}\label{subsection:general:whitt}

We define a distinguished non-degenerate
character $\Psi:U^{op}\rightarrow \C$ that is compatible with the
fixed family of isomorphisms $\{\phi_a\}$ from section \ref{sec:qs:groups}. 

For a quasi-split group $G_1$ of $F$-rank $1$
with Borel subgroup $T_1\cdot U_1$, we define a
complex character $\Psi_1$ of $U^{op}_1$ by

\begin{itemize}
  \item
$\Psi_1(x(r))=\psi(\T_{L/F}(r))$ if $G_1=\Res_LSL_2$
  \item
$\Psi_1(x(r,s))=\psi(\T_{K/F}(r))$ if $G_1=\Res_L SU_3,$   
corresponding to a quadratic field extension $K/L$. 
\end{itemize}

Let  $\Psi$ be the unique character of $U^{op}$
such that for every simple root ray $a$
the restriction $\Psi$ to $U_{-a}$ equals $\Psi_1^a=
\Psi_1\circ \phi^{-1}_a$.

For this $\Psi$ the Whittaker map $\mW_\Psi:\mS_c(X)\rightarrow \mS_c(T)$,
defined as in the introduction,
$$\mW_\Psi(f)(t)=\int\limits_{U^{op}} \theta(t)f([u]) \Psi^{-1}(u) du$$
gives rise to an isomorphism $\mS_0(X)_{U^{op},\Psi}\simeq \mS_0(T),$
where  $\mS_0(T)=\mW_\Psi(\mS_0(X))$.
This isomorphism induces the  map 
$$\kappa_\Psi:\End_G(\mS_0(X))\rightarrow
\End_\C(\mS_0(X)_{U^{op},\Psi})=\End_\C(\mS_0(T))$$

\begin{Lem}\label{injective:general}  The map $\kappa_\Psi$ is injective.
\end{Lem}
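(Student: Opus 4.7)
The plan is to reduce the injectivity of $\kappa_\Psi$ to the injectivity of a Whittaker-realization map $\widetilde W\colon \mS_0(X)\to C^\infty(G)$ defined by $\widetilde W(h)(g):=\mW_\Psi(\theta(g,1)h)(e)$, and then to establish the latter by harmonic analysis on the unipotent radical $U$.

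First I would observe that if $\kappa_\Psi(B)=0$, i.e.\ $\mW_\Psi\circ B=0$ on $\mS_0(X)$, then by the $G$-equivariance of $B$ together with the $G$-stability of $\mS_0(X)$, for every $f\in\mS_0(X)$ and every $g\in G$ one has
\[
\mW_\Psi(\theta(g,1)B(f))(t)\;=\;\mW_\Psi(B(\theta(g,1)f))(t)\;=\;0 \qquad \forall\,t\in T,
\]
which says precisely that $\widetilde W(B(f))\equiv 0$ on $G$. Granted the injectivity of $\widetilde W$ on $\mS_0(X)$, this forces $B(f)=0$ for every $f$, hence $B=0$.

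For the injectivity of $\widetilde W$, fix $h\in \mS_0(X)$ with $\widetilde W(h)\equiv 0$. Replacing the argument $g$ by $tg$ for $t\in T$ and using the identity $n_0t=t^{w_0}n_0$ together with the substitution $u\mapsto tut^{-1}$ (Jacobian a power of $\delta_B(t)$) in the Whittaker integral, one obtains, for every $t\in T$ and $g\in G$,
\[
\int_U h(Ut^{w_0}n_0 u g)\,\Psi_t^{-1}(u)\,du=0, \qquad \Psi_t(u):=\Psi(tut^{-1}).
\]
The non-degeneracy of $\Psi$ makes $\Psi_t|_{U_a}$ non-trivial for every simple root ray $a$, and as $t$ ranges over $T$ the family $\{\Psi_t\}$ fills out a Zariski-dense subset of the non-degenerate characters of the abelianization $U/[U,U]\simeq\prod_a U_a$. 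For fixed $g$ the function $u\mapsto h(Un_0 ug)$ on $U$ has compact support (since $h\in\mS_c(X)$ and $u\mapsto Un_0 ug$ is a proper embedding of $U$ onto a slice of the big Bruhat cell in $X$), so Fourier inversion on $U/[U,U]$, combined with the defining condition $h\in\bigcap_\gamma\Ker P_\gamma$ which strips off precisely those constituents of $\mS_c(X)$ invisible to non-degenerate Whittaker coefficients, forces $h(Un_0 ug)=0$. As $g$ varies over $G$ the image $Un_0 ug$ sweeps out a dense subset of $X$; by smoothness of $h$, we conclude $h\equiv 0$.

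The hard part will be the last reconstruction step. In higher rank $U$ is non-abelian, so Fourier inversion applies directly only on $U/[U,U]$; the passage from vanishing of non-degenerate Whittaker data at every $T$-conjugate of $\Psi$ to vanishing of $h$ itself relies crucially on the genericity built into $\mS_0(X)$ via the simultaneous vanishing of all Mellin transforms $P_\gamma$, which removes the ``small'' constituents of $\mS_c(X)$ on which the Whittaker functional is degenerate.
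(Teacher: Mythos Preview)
Your reduction step is correct and is exactly what the paper does: if $\kappa_\Psi(B)=0$ then $\im(B)$ is a $G$-submodule of $\mS_0(X)$ contained in $\Ker\mW_\Psi$, so it suffices to show that $\Ker\mW_\Psi$ contains no nonzero $G$-submodule (equivalently, that your map $\widetilde W$ is injective).

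However, your argument for the injectivity of $\widetilde W$ has real gaps. First, the displayed identity you derive,
\[
\int_U h(Ut^{w_0}n_0 u g)\,\Psi_t^{-1}(u)\,du=0,
\]
has $t^{w_0}$ sitting inside the argument of $h$, so the integrand is \emph{not} the fixed function $u\mapsto h(Un_0 ug)$ tested against the varying character $\Psi_t$; both the function and the character move with $t$. If you try to remove $t^{w_0}$ by a further substitution you simply unwind back to the original condition $\widetilde W(h)(g')=0$. So you never actually obtain a Fourier-inversion setup on $U/[U,U]$ for a single compactly supported function. Second, even if you had such a setup, knowing the vanishing only for the $T$-orbit of nondegenerate characters is far from Fourier inversion; and your appeal to the conditions $P_\gamma(f)=0$ to ``strip off'' the degenerate pieces is not justified: those $P_\gamma$ are Mellin transforms along $T$ at the specific reducibility exponents of the degenerate principal series, and they say nothing direct about which characters of $U$ see $h$.

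The paper bypasses all of this with a short representation-theoretic argument: for a generic character $\chi$ of $T$ the coinvariants $\mS_0(X)_{T,\chi}$ realize the irreducible principal series $I^G_B(\chi)$, and a nonzero $G$-submodule $V\subset\Ker\mW_\Psi$ would surject onto $I^G_B(\chi)$ for some such $\chi$; exactness of $(\cdot)_{U,\Psi}$ then forces $I^G_B(\chi)_{U,\Psi}=0$, contradicting the existence of Whittaker models for principal series. This is where the genericity enters, not through the $P_\gamma$.
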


  \begin{proof}
  Let us show that  $\Ker \mW_\Psi$ does not contain non-zero $G$-modules.
  Indeed, assume that  $V\subset \Ker \mW_\Psi\subset \mS_0(X)$
  is a non-zero $G$-module.  For any character $\chi$ of $T$ the space
  of coinvariants  $\mS_0(X)_{T,\chi^{-1}}$  is naturally isomorphic
  to the normalized  principal series representation $\Ind^G_B(\chi)$.
  The functor of coinvariants induces a map
  $V_{T,\chi^{-1}}\rightarrow \Ind^G_B(\chi).$
  For every character $\chi$ in a  Zarisky-open set one has: 
  \begin{itemize}
    \item
      for some $f\in V$ the Mellin transform
      $P_\chi(f)=\int\limits_T \theta(t)f \cdot \chi(t)dt\neq 0,$
    \item   the representation  $\Ind^G_B(\chi)$ is irreducible.
\end{itemize}
  We pick such $\chi$. Since $f$ does not belong to the kernel of $P_\chi$,
  so  the map $V_{T,\chi^{-1}}\rightarrow \Ind^G_B(\chi)$ is non-zero, thus surjective. 
  The functor of coinvariants with respect to $(U^{op},\Psi)$ is exact and hence
  there is a surjection
  $$(V_{T,\chi^{-1}})_{U^{op},\Psi}\rightarrow \Ind^G_B(\chi)_{U^{op},\Psi}.$$
  Since  $V\subset \Ker \mW_\Psi,$ one has  $0=V_{U^{op},\Psi}=
  (V_{U^{op},\Psi})_{T,\chi^{-1}},$
while $\Ind^G_B(\chi)_{U^{op},\Psi}\neq 0$. This is a contradiction.

    Let $\mB\in \End_{G}(\mS_0(X))$ such that $\kappa_\Psi(\mB)=0$.
    Then  $\mW_\Psi\circ \mB=0,$ and  $\im(\mB)$ is a $G$-module,
  contained in $\Ker \mW_\Psi$ and hence is zero.
  So $\mB=0$ and $\kappa_\Psi$ is injective. 
\end{proof}

  We have defined all the notation, mentioned in  Theorem \ref{main}.
  It states:
     
There exists a unique  family of unitary operators
$\Phi_w\in \Aut(L^2(X)), w\in W$ that preserves the space $\mS_0(X)$ 
and satisfies
\begin{equation}\label{request:general}
  \left\{
 \begin{array}{ll} 
   \theta(g,t)\circ \Phi_w=\Phi_w\circ \theta(g,t^w)& g\in G, t\in T\\
   \kappa_\Psi(\Phi_w)(\varphi)=w\cdot \varphi  & \varphi\in \mS_0(T)\\
    \Phi_{w_1}\circ \Phi_{w_2}=\Phi_{w_1w_2}& w_1,w_2\in W\\
\end{array}\right.
\end{equation}

We begin with the construction of the operators
$\Phi_{s}$ for simple reflections, based on the results
for the groups of rank one. 

\subsubsection{The definition of $\Phi_{s_a}$}
The space $L^2(X)$ is the unitary completion $L^2\text{-}\ind^G_U 1$
of the space $\mS_c(X)=\ind^G_U 1$. 

For a simple root ray $a$ of $G$ consider a parabolic subgroup
$P_a=M_a\cdot U^a$, with the derived group $P'_a=M'_a U^a,$
where $M'_a=G_a$  is  a semisimple group  of rank $1$.
We denote by $B_a=T_a\cdot U_a$ the Borel subgroup of $G_a$
and put $X_a=U_a\backslash G_a$.

Consider the isomorphism, implied by the transitivity of induction,
  $$\iota_a: L^2(X)\rightarrow  L^2\text{-}\ind^G_{P_a'} L^2(X_a).$$
  defined by $\iota_a(f)(g)([m])=f([mg]).$

The isometry $\Phi_{s}$ on $L^2(X_a)$, defined in sections
\ref{sec:SL2} and  \ref{sec:SU3} gives rise to an isometry
on $L^2(X)$ by functoriality of induction. We continue
to denote this isometry by $\Phi_{s_a}$. 

\begin{Def} The operator
  $\Phi_{s_a}\in \Aut_G(L^2(X))$ is defined  by
  $$\iota_a(\Phi_{s_a}(f))(g)=\Phi_s(\iota_a(f)(g)),\quad  f\in L^2(X), g\in G.$$
\end{Def}

\begin{Prop}\label{Phi:s:equi} 
  For any simple root ray $a$ the operator
  $\Phi_{s_a}\in \Aut(L^2(X))$
 is a unitary involution satisfying
  $\theta(g,t)\circ \Phi_{s_a}=\Phi_{s_a}\circ \theta(g,t^{s_a}).$
\end{Prop}
\begin{proof}
  The only non-trivial statement is the equivariance of $T$
  which is enough to prove for   $f\in\mS_0(X)$.
  
Consider an embedding with dense image
$$j: T_a\times U_a\hookrightarrow X_a, \quad (t,u)\mapsto t^{-1}n_{s_a} u.$$

For  $f\in \mS_0(X)$ the Fourier transform is given by 
$$\Phi_{s_a}(f)([g])=
\int\limits_{T_a}\int\limits_{U_a} f(t^{-1}n_{s_a}ug)
\mL(\la [t^{-1}n_{s_a}u], [1]\ra_{X_a}) \delta_B(t_1) dt_1 du.$$

Here $\mL=\psi$ for $a=\{\alpha\}$ and
is defined by \ref{def:L} for $a=\{\alpha,2\alpha\}.$

Assume that  $a=\{\alpha\}$.
  Using \ref{SL2:Phi} for $f\in \mS_c(X)$ one has
  $$\theta(t_1)\Phi_{s_a}(f)([g])=
  \delta_B^{1/2}(t_1)\Phi_{s_a}(f)([t_1^{-1}g])=$$
  $$=\delta_B^{1/2}(t_1)
  \int\limits_{T_a}
  \int\limits_{U_a} f([t^{-1} n_{s_a} u t_1^{-1} g])
  \mL(\la [1], [t^{-1}n_{s_a}]\ra)
  \delta_B(t) dt du=$$
  $$\delta_B^{1/2}(t_1)\delta^{M_a'}_{B_a}(t_1)^{-1}
  \int\limits_{T_a}
  \int\limits_{U_a} f((t_1^{s_a})^{-1} t^{-1} n_{s_a} u  g)
   \psi(\la [1], [t^{-1}n_{s_a}]\ra)
 \delta_B(t) dt du=$$
  $$\Phi_{s_a}(\theta(t_1^{s_a})f)([g]).$$

  We have used the fact that the inner product is $G_a$ invariant and that 
$$\delta_B^{1/2}(t_1)\delta^{M_a'}_{B_a}(t_1)^{-1}
=\delta_B^{1/2}(t_1^{s_a}).$$

\end{proof}

\begin{Prop} \label{Phi:s:S0} For any simple root ray $a$ the operator
  $\Phi_{s_a}$ preserves $\mS_0(X)$.
\end{Prop}

\begin{proof}
  We have defined for any root ray $a$ the set of triples $\bB_a$ such that
  $\mS_0(X)\subset \mS_{\bB_a}(X)\subset \mS_c(X)$.
  In fact $\mS_{\bB_a}(X)=\ind^G_{P'_a} \mS_0(X_a)$ which is preserved by
  $\Phi_{s_a}$ by Definition \ref{def:S0:general} and by Propositions
  \ref{main:sl2}, \ref{main:su3}. 
  In particular,   $\Phi_{s_a}(\mS_0(X))\subset \mS_c(X)$.
  
 For  $f\in \mS_0(X)$ let us show that $\Phi_s(f)\in \mS_0(X)$.
 For any triple $(L_\alpha, \phi_a\circ t,\chi)\in \bB_a$ denote
 the Mellin transform by $P(\chi,\alpha).$
 Then $P(\chi,s_a(\alpha))\Phi_{s_a}(f)=\Phi_{s_a}(P(\chi,\alpha)f)=0$
 by the equivariance property of $\Phi_{s_a}$. 
  
\end{proof}

\subsubsection{The operator $\kappa_\Psi(\Phi_{s_a})$}

In this subsection we compute $\kappa_\Psi(\Phi_{s_a})$
for the character $\Psi$ defined in \ref{subsection:general:whitt}.

\begin{Prop}\label{kappa:Phi:s}
  For any $\varphi\in \mS_0(T)$ one has 
$$\kappa_{\Psi}(\Phi_{s_a})(\varphi)= s_a\cdot \varphi.$$
\end{Prop}
\begin{proof}
We shall show first the statement for $t=1$, i.e.
$$\kappa_\Psi(\Phi_{s_a})(\varphi)(1)=\theta(t_a)\varphi(1).$$

Let $a$ be a positive root ray. 

$$\mW_\Psi(\Phi_{s_a}(f))(1)=
\int\limits_{U^{op}} \Phi_{s_a}(f)([u])\Psi^{-1}(u) du.$$

We use decomposition $U^{op}=U^{-a} U_{-a}$ where $U^{-a}$ is the product
of all root subgroups corresponding to the negative root rays, except $-a$.
One has
$$\mW_\Psi(\Phi_{s_a}(f))(1)=\int\limits_{U^{-a}} \left(\int\limits_{U_{-a}}
\Phi_{s_a}(f)([u_1 u_2])\Psi^{-1}(u_1)du_1\right) \cdot \Psi^{-1}(u_2) du_2.$$
The character $\Psi$ restricted to $U_{-a}$ equals $\Psi^a_1$
by the definition of $\Psi$.
The inner integral equals
$$\int\limits_{U_{-a}}
\Phi_{s_a}(\iota_a(f)(u_2))([u_1] ) \Psi_1^a(u_1^{-1})du_1=
\mW_{\Psi^a_1}(\Phi_{s_a}(\iota_a(f)(u_2))(1).$$
By Theorems \ref{main:sl2} and \ref{main:su3} it equals to 
$ \mW_{\Psi^a_1}(\theta(t_a)\iota_a(f)(u_2))(1).$

Thus
$\mW_\Psi(\Phi_{s_a}(f))(1)$ equals
$$\int\limits_{U^{-a}} \left(\int\limits_{U_{-a}}
(\theta(t_a)\iota_a(f))(u_2)([u_3])
\Psi^{-1}(u_3)du_3\right) \cdot \Psi^{-1}(u_2) du_2=
\mW_\Psi(\theta(t_a)f)(1).$$

For an arbitrary $t\in T$ one has 
$$\kappa_\Psi(\Phi_{s_a})(\varphi)(t)=
\kappa_\Psi(\theta(t) \Phi_{s_a})(\varphi)(1)=$$
$$\kappa_\Psi(\Phi_{s_a})(\theta(t^{s_a})\varphi)(1)=
\theta(t_a t^{s_a})\varphi(1)=$$
$$\varphi(t_at^{s_a})={s_a}\cdot \varphi(t)$$
as required. 
\end{proof}

Now we are ready to prove Theorem \ref{main}. 
\begin{proof}
  The injectivity of $\kappa_\Psi$ implies the uniqueness of the
  family $\Phi_w, w\in W$. To prove  Theorem  it is enough to construct
  the operators $\Phi_w$.
 For any $w\in W$ there is a  presentation
 $w=s_{a_1}\cdot \ldots \cdot s_{a_n}$  as a product of simple reflections.
  We define the operator
  $\Phi_w\in \Aut(L^2(X))$
  $$\Phi_w(f)=\Phi_{s_{a_1}}\circ \ldots \circ\Phi_{s_{a_n}}.$$
 
  The operator $\Phi_w$ is unitary, preserves $\mS_0(X)$  and satisfies
  $\theta(g,t)\circ \Phi_w=\Phi_w\circ \theta(g,t^w)$ for
  $g\in G, t\in T$.

  Clearly, $\kappa_\Psi$ is a homomorphism of algebras.
  In particular,
$$\kappa_\Psi(\Phi_w)(\varphi)=\kappa_\Psi(\Phi_{s_{a_1}})\circ\ldots \circ
\kappa_\Psi(\Phi_{s_{a_n}})(\varphi)=
s_{a_1}\cdot \ldots s_{a_n}\cdot \varphi=w\cdot \varphi,$$
and hence $\kappa_\Psi(\Phi_w)$ does not depend on the presentation
of $w$. Since $\kappa_\Psi$ is injective, the operator $\Phi_w$
does not depend on the presentation of $w$. The property
$\Phi_{w_1w_2}=\Phi_{w_1}\circ \Phi_{w_2}$ is obvious from the definition. 
\end{proof}

\bibliographystyle{alpha}
 \bibliography{bib}

\end{document}